\crefname{hypothesis}{Hypothesis}{Hypotheses}
\title{A Note on Randomized Kaczmarz Algorithm for Solving Doubly-Noisy Linear Systems}
\author{El Houcine Bergou \thanks{College of Computing, Mohammed VI Polytechnic University, Morocco, (\email{elhoucine.bergou@um6p.ma}).}
\and
Soumia Boucherouite \thanks{College of Computing, Mohammed VI Polytechnic University, Morocco, (\email{soumia.boucherouite@um6p.ma}).}
\and 
Aritra Dutta \thanks{Department of Mathematics, University of Central Florida, USA, (\email{aritra.dutta@ucf.edu}, \url{https://sciences.ucf.edu/math/person/aritra-dutta/}).}
\and
Xin Li\thanks{Department of Mathematics, University of Central Florida, USA, (\email{xin.li@ucf.edu}).} 
\and 
Anna Ma \thanks{Department of Mathematics, University of California, Irvine, USA, (\email{anna.ma@uci.edu}).}
}
\DeclareMathOperator{\diag}{diag}
\newcommand*{\addFileDependency}[1]{% argument=file name and extension
  \typeout{(#1)}% latexmk will find this if $recorder=0 (however, in that case, it will ignore #1 if it is a .aux or .pdf file etc and it exists! if it doesn't exist, it will appear in the list of dependents regardless)
  \@addtofilelist{#1}% if you want it to appear in \listfiles, not really necessary and latexmk doesn't use this
  \IfFileExists{#1}{}{\typeout{No file #1.}}% latexmk will find this message if #1 doesn't exist (yet)
}
\newcommand{\E}[1]{{\mathbb{E}}\left[#1\right] }
\DeclareMathOperator{\argmin}{argmin}        % argmin
\newcommand{\R}{\mathbb{R}} % Reals
\newcommand{\myNum}[1]{(\emph{#1})}
\begin{document}

\maketitle

% REQUIRED
\begin{abstract}
 Large-scale linear systems, $Ax=b$, frequently arise in practice and demand effective iterative solvers. Often, these systems are noisy due to operational errors or faulty data-collection processes. In the past decade, the randomized Kaczmarz (RK) algorithm has been studied extensively as an efficient iterative solver for such systems. However, the convergence study of RK in the noisy regime is limited and considers measurement noise in the right-hand side vector, $b$. Unfortunately, in practice, that is not always the case; the coefficient matrix $A$ can also be noisy. In this paper, we analyze the convergence of RK for {\textit{doubly-noisy} linear systems, i.e., when the coefficient matrix, $A$, has additive or multiplicative noise, and $b$ is also noisy}. In our analyses, the quantity $\tilde R=\| \tilde A^{\dagger} \|^2 \|\tilde A \|_F^2$ influences the convergence of RK, where $\tilde A$ represents a noisy version of $A$. We claim that our analysis is robust and realistically applicable, as we do not require information about the noiseless coefficient matrix, $A$, and considering different conditions on noise, we can control the convergence of RK. {We perform numerical experiments to substantiate our theoretical findings.}
\end{abstract}

% REQUIRED
\begin{keywords}
Noisy linear systems, Randomized Kaczmarz algorithm, Iterative method, Generalized inverse, Perturbation analysis, Least Squares solutions, Singular value decomposition. 
\end{keywords}

% REQUIRED
\begin{AMS}
15A06, 15A09, 15A10, 15A18, 65F10, 65Y20, 68Q25, 68W20, 68W40
\end{AMS}

% \section*{General Comments}
% \begin{enumerate}
%     \item Why do we use squared error in some places and non-squared error in other places? Can we standardize it?
%     Add a comment following Theorem 3.3 regarding the triangle inequality factor of 2.
% %    \item Regarding the inequality after ``we consider only acute perturbations" Are these next few sentences necessary? Can we just mention explicitly the condition in the assumption (as updated)? 
%     \item Whenever we refer to $\tilde{A} x = \tilde{b}$ I suggest referring to the system as ``doubly-noisy" this helps distinguish between ``noisy" (right-hand side noise) linear systems. In some places, we also refer to it as ``perturbed" (doubly noisy) and ``unperturbed" (noise-less). I suggest reducing the number of ways in which we refer to the same systems.
% \end{enumerate}

\section{Introduction}
%Solving  in the digitized era is challenging due to the abundance of data. 

In the digitized era, large-scale linear systems occur frequently in different research fields, including parameters estimation and inverse problems (e.g., the coefficient matrices in computed tomography applications can have $\sim10^9$ entries; see \cite{tomography}) \cite{ATarantola_2005, allgower1999nonlinear}, numerical solutions for PDEs (e.g., the most extensive numerical simulation of a turbulent fluid uses about $5\times 10^{10}$ grid points) \cite{doi:10.1073/pnas.1814058116, HAntil_DPKouri_MDLacasse_DRidzal_2016EDS}, 
generalized regression analysis in computer vision on high-resolution images or videos (characterized by a large number of rows, in the order of $\sim 10^5$ and more; see \cite{dutta2019online, dutta2017weighted}), meteorological predictions \cite{EBergou_SGratton_LNVicente_2016, asch2016, YTremolet_2007}, and many more. However, dimensionality is not the only curse of these complex linear systems; they are also poorly conditioned, which demands efficient solution strategies \cite{EBergou_2014, benzi2005numerical}.~Moreover, the data-curation process adds noise to the original data due to diverse operational errors. E.g., seismic and micro-seismic data collected using passive seismic techniques often differ from the true ones and lead to misinterpretation \cite{gajek2021errors}. Furthermore, stochastic differential equations (SDEs) are increasingly used and adopted to analyze many physical phenomena \cite{machiels1998numerical,keller1964stochastic}, and noise may enter the system due to ``internal degrees of freedom or as random variations of some external control parameters" \cite{du2002numerical}. As another example, in oil and gas industries, large saddle-point systems that arise from the numerical discretization of certain PDEs (e.g., a mixed-finite element discretization of a Poisson problem on a series of triangular meshes \cite{benzi2005numerical, chiu2014efficient}) can inherently {have} additive or multiplicative noise \cite{tambue2016weak,du2002numerical}. 

Instead of {the} noiseless linear system 
\begin{equation}\label{eq:mainPbLinearfreeerror}
 Ax = b,
\end{equation}
it is, therefore, more realistic to analyze {perturbed or, more generally, doubly-noisy\footnote{Here, we use the term ``doubly-noisy" to distinguish from the classical ``noisy" linear systems, where only the right-hand side vector, $b$, is noisy.} linear systems}. That is, 
%instead of the noiseless coefficient matrix, $A$, and vector, $b$, in this paper, 
we assume to have access to the noisy versions of \textit{both} $A$ and $b$, denoted as $\tilde A\in\R^{m\times n}$ and $\tilde b\in\R^{m}$, respectively. Note that the doubly-noisy linear system 
\begin{equation}\label{eq:mainPbLinear}
 \tilde A x \approx \tilde b,
\end{equation}
is not necessarily consistent. We assume 
that there is an underlying consistent noiseless linear system (\ref{eq:mainPbLinearfreeerror}).

The Kaczmarz method \cite{Kaczmarz1937} is a popular algorithm for solving {consistent} systems of linear equations.~{However, it has also been applied to approximate solutions for inconsistent linear systems \cite{needell2010randomized}}.~When applied to (\ref{eq:mainPbLinear}), it proceeds iteratively as follows
\begin{eqnarray}\label{eq:ka}
x_{k+1}=x_k-\frac{\tilde a_{i(k)}^\top x_k-\tilde b_{i(k)}}{\|\tilde a_{i(k)}\|^2}\tilde a_{i(k)},
\end{eqnarray}
where $k \ge 0$ is the iteration counter, $\tilde a_{i}^\top$ is the $i^{\rm th}$ row of the matrix $\tilde A$, $\tilde b_{i}$ is the $i^{\rm th}$ element of the vector, $\tilde b$. If $i(k)$ is chosen randomly with replacement based on some probability distribution, then \eqref{eq:ka} represents the {\em randomized Kaczmarz algorithm} (RK) \cite{Strohmer2013, Bai2018, sahu2020convergence}. 

%The RK algorithm is well-studied for one-sided noise, but not for doubly-noisy linear systems. In \cite{needell2010randomized}, Needell provided convergence bounds for RK in expectation when the measurement vector, $b$, was assumed to be noisy. 
In this paper, 
%we examine a general setting where we have, in addition to a noisy right-hand side, 
 %noisy coefficient matrix, $\tilde{A}$.~Moreover, we do not only 
 we consider both the {\em additive noise}
 as well as {\em multiplicative noise}. Motivated by the multiplicative perturbation theory \cite{cai2011additive}, we recognize that certain structured least squares problems, e.g., those involving Vandermonde or Cauchy matrices \cite{drmac2020least}, are often highly ill-conditioned and have multiplicative backward errors \cite{castro2016multiplicative}. Therefore, we consider {\em multiplicative noise} that transforms the matrix, $A$ to $\tilde{A}=(I_m+E)A(I_n+F)\in\R^{m\times n}$, where {$E \in \R^{m\times m},$ $F \in \R^{n\times n}$,} $I_m$ and $I_n$ are identity matrices,~{and overall, $(I_m+E)$ and $(I_n+F)$ are nonsingular}. In both the additive and multiplicative error settings on $A$, we consider additive noise on the right-hand side vector $\tilde b=b+\epsilon,$~{where $\epsilon \in \R^{m}$ is the noise.} %with different conditions on $\epsilon.$ 

\subsection{Contributions and organization} The remainder of this paper is organized as follows. Section~\ref{sec:related_work} presents an overview of the related works. In Section~\ref{sec:perturb}, we present a simple analysis for the convergence of RK in the setting of additive noise on the coefficient matrix $A$ by using the perturbation bounds of the Moore–Penrose inverse \cite{wedin1973perturbation}. In Section~\ref{sec:direct}, we present an analysis of the convergence of iteration by directly analyzing RK on doubly-noisy linear systems. Finally, Section~\ref{sec:experiments} presents the numerical experiments which support our theoretical findings.

Our main contributions can be summarized as follows:
\begin{itemize}
    \item To the best of our knowledge, this work is the first effort to systematically analyze RK for doubly-noisy linear systems, i.e., when the coefficient matrix, $A$ {has} additive or multiplicative noise {and $b$ is a noisy measurement vector}; see Section \ref{sec:perturb} and Section~\ref{sec:direct}. 
    %\anna{I disagree with the statement (better convergence horizon for RK -- it's comparable for the same setting. So our main contribution is a generalization: Our results in Theorem \ref{theorem:general_theorem} (for additive noise) and Theorem \ref{theorem:general_theorem_multiplicative} (for multiplicative noise) provide a better convergence horizon for RK applied to a {\em doubly-noisy} system and that too under weaker assumptions than the current state-of-the-art. }
    \item {While the convergence result of RK can be obtained using perturbation theory, it requires an assumption that the noise, $E$ does not change the rank of the original matrix ${A}$, that is, ${\rm rank}(A+E)=A,$ along with the consistency of the noisy system---they are restrictive and unrealistic assumptions.~Our most general result, Theorem \ref{theorem:general_theorem}, does not require such assumptions and holds for any general noise, $E$.}
    \item  In the setting in which the linear system only {has} right-hand side noise, our bounds produce comparable bounds to that in existing work; see Sections \ref{sec:related_work} { and \ref{sec:perturb}}, and Corollary~\ref{corr:general_theorem}.
    %We validate our findings by relating them to the existing state-of-the-art analyses in the literature that only consider noisy $b$; see Section \ref{sec:related_work}.~Indeed, our bounds are compared to the existing analyses of noisy RKA in the literature; see Section \ref{sec:improvement}.
    
 \item Finally, in Section \ref{sec:experiments}, we perform numerical experiments to validate our theoretical results. 
    
\end{itemize}

\def\buildTable#1{%
    \pgfplotstableread[col sep = comma]{#1}\rawdata%
    \pgfplotstabletypeset[before row= \midrule]\rawdata
}%

\begin{table}
\footnotesize
\centering
\addtolength{\tabcolsep}{-0.15em}
\begin{tabular}{c c c c c}  %{l l l l l l}

%\multicolumn{9}{c}{} \\
%\setlength{\tabcolsep}{-1pt} \\
 \midrule
%\hline
 &    &  {Convergence} & {Convergence}  & \\
Quantity & Linear system   &  Rate & Horizon & Reference\\
 \midrule
$\mathbb{E}[\|x_k-x_{\rm LS}\|^2]$ & $Ax \approx b+\epsilon$ & $(1-\frac{1}{R})$ & $R \left(\max_i \frac{\epsilon_i}{\|A_i\|}\right)^2 $ & \cite{needell2010randomized}\\
\midrule
$\mathbb{E}[\|x_k-x_{\rm LS}\|^2]$ & $Ax\approx b+\epsilon$ & $(1-\frac{1}{R})$ & $\frac{\|\epsilon \|^2}{\sigma_{\rm min}^2(A)}$ & Theorem \ref{thm:zouzias}; \\
&&&& see \cite{zouzias2013randomized}\\
\midrule
$\mathbb{E}\|x_k-x_{\rm LS}\|$ & $(A+E)x\approx b+\epsilon$ & $(1-\frac{1}{\tilde R})^{\frac{1}{2}}$ & $\frac{2\|  x_{{\rm LS}} \|\|A^\dagger\|\|E\|}{1-\|A^\dagger\|\|E\|}$ & Theorem \ref{theorem:ls_rk_pnls}; \\
&&& $+ \frac{\|\epsilon\|}{\sigma_{\min}(\tilde{A})}$ & this paper\\
\midrule
%$\mathbb{E}[\|x_k-x_{\rm LS}\|^2]$ & $Ax=b+\epsilon$, $\epsilon$ is sparse & $Ax=b$ is consistent & $(1-\frac{1}{R})$ & \cite{jarman2021quantilerk}\\
%\hline
%$\mathbb{E}[\|x_k-x_{\rm LS}\|^2]$ & $Ax=b+\epsilon$, & $Ax=b$ is consistent, & $(1-\frac{1}{R})$ & $\frac{\|\epsilon \|^2}{\sigma_{min}^2(A)}$ & \cite{needell2014paved}\\
%&& $A$ is row-normalized &&\\
%\hline
%$\mathbb{E}[\|x_k-x_{\rm LS}\|]$ & $(A+E)x=b+\epsilon$ & $\tilde{A}x=\tilde{b}$ is consistent, & $(1-\frac{1}{\tilde{R}})^{1/2}$ & Theorem \ref{theorem:ls_rk_addtivenoise},\\
%&& Assumption \ref{assumption:rank} holds, $\|A\|\|E\|<1$ & & this paper\\
%\hline 
%&& Assumption \ref{assumption:rank} holds, $\|A\|\|E\|<1$ & & this paper\\
%\hline
$\mathbb{E}[\|x_k-x_{\rm LS}\|^2]$ & $(A+E)x\approx b+\epsilon$  & $(1-\frac{1}{\tilde{R}})$ & $\frac{\|Ex_{LS} - \epsilon \|^2}{\sigma_{\rm min}^2(A)}$& Theorem \ref{theorem:general_theorem},\\
& & & & this paper\\
\midrule
%$\mathbb{E}[\|x_k-x_{\rm LS}\|]$ & $(I_m+E)A(I_n+F)x=b+\epsilon$ & $\tilde{A}x=\tilde{b}$ is consistent, & $(1-\frac{1}{\tilde{R}})^{1/2}$ & Theorem \ref{theorem:multipicative noise},\\ 
%&& no restriction on $E, F$ & & this paper\\
%\hline
% $\mathbb{E}[\|x_k-x_{\rm LS}\|^2]$ & $(A+E)x=b+\epsilon$ & $Ax=b$ is consistent & $(1-\frac{1}{\tilde{R}})$ & Theorem \ref{theorem:general_theorem},\\ &&&& this paper\\
% \hline
$\mathbb{E}[\|x_k-x_{\rm LS}\|^2]$ & $(I_m+E)A(I_n+F)x\approx b+\epsilon$  & $(1-\frac{1}{\tilde{R}})$ & $\frac{\|\Delta Ax_{LS} - {\epsilon}\|^2}{\sigma_{\rm min}^2(\tilde{A})}$ & Corollary \ref{theorem:general_theorem_multiplicative},\\ 
& & & & this paper\\
\midrule
\end{tabular}
\caption{Summary of RK applied to noisy linear systems. In each row, we assume that $Ax = b$ is a consistent linear system and we define $R=\| A^\dagger\|^2 \|A \|^2_F$ and $\tilde{R}=\| \tilde{A}^\dagger\|^2 \|\tilde{A}\|^2_F$. Note that, $\Delta A=(EA + AF + EAF).$ {For details about convergence rate and horizon, see the comment following Theorem~\ref{thm:zouzias}.}}\label{table1}
\end{table}

\subsection{Notation}\label{sec:notation}
{The $i^{\rm th}$ component of a vector $x$ is denoted as $x_i,$ $x^\top$ denotes the transpose of $x$, and $\| x \|$ denotes the $\ell_2$-norm of the vector $x$. For a matrix $M\in\R^{m\times n}$, the $i^{\rm th}$ singular value of $M$ is denoted by $\sigma_{i}(M)$, $\sigma_{\rm min}(M)$ is the smallest nonzero singular value, and $\sigma_{\rm max}(M)=\sigma_1$, is the largest singular value. The column space or the range of $M$ is defined by ${\rm range}(M).$~We denote the (Moore-Penrose)-pseudoinverse of $M$ by $M^\dagger.$~The spectral norm and the Frobenius norm of $M$ are denoted by  $\|M\|$ and $\|M\|_F,$ respectively.}
{For a linear system $Mx=y$, we denote $x_{\rm LS}:=M^\dagger y$. As it is well-known, $x_{\rm LS}\in\argmin_x \| Mx - y \|^2,$ and it is the least squares solution with the minimal norm.}

\subsection{Brief literature review}\label{sec:related_work}
%The Kaczmarz algorithm was originally proposed by Kaczmarz in 1937~\cite{Kaczmarz1937} to solve over-determined linear systems. 

The Kaczmarz algorithm (KA) \cite{Kaczmarz1937} was proposed by Stefan Kaczmarz, and Strohmer and Vershynin proposed and analyzed its randomized variant, RK \cite{Strohmer2013}. Due to its simplicity and low-memory footprint, many works have considered RK in different settings since then. E.g., in \cite{tan2019phase, jeong2017convergence}, the authors propose an RK-type algorithm for solving the phase retrieval problem. \cite{haddock2021greed, bai2018relaxed} study the interplay between RK using greedy and random row select. Many works have proposed variations of RK, including block-wise selection methods, e.g., \cite{needell2014paved, necoara2019faster, du2020randomized}; quantile-based methods for sparse corruptions in $b$, e.g., \cite{jarman2021quantilerk,haddock2022quantile,steinerberger2023quantile}; and variations for when $x$ is assumed to be sparse \cite{lorenz2014sparse, lei2018learning, schopfer2019linear}. 

In addition, previous works have also considered RK applied to {inconsistent,} noisy linear systems. The first such work was~\cite{needell2010randomized}, {which proved that the RK converges to a ball centered around the least squares solution. We refer to the radius of this ball as the \textit{convergence horizon}; also, see \cite{needell2014paved}}. %convergence horizon~depending on the noisy vector $\epsilon$. 
When $E = 0$ and $\epsilon$ has no structural assumptions, Needell, {in ~\cite{needell2010randomized},} proved that the iterates of RK converge in expectation to $x_{\rm LS}$ within a convergence horizon of $R \gamma ^2 $, where $R=\| A^\dagger\|^2 \|A \|^2_F$ is a scaled condition number of $A$ and $\gamma = \max_i \epsilon_i / \| a_i\|_2$. Needell and Tropp later improved this convergence horizon for standardized matrices (row-normalized) to $\| \epsilon\|^2\| A^\dagger \|^2$~\cite{needell2014paved}.~Jarman and Needell in~\cite{jarman2021quantilerk} proposed quantile-RK for the setting $E=0$ and $\tilde{b} = b + \epsilon,$ where $\epsilon$ is a sparse corruption vector. The result was further generalized by Zouzias and Freris~\cite{zouzias2013randomized} who showed a convergence horizon of $\|\epsilon \|^2 \| A^\dagger \|^2$ for any matrix $A$. In the same work, Zouzias and Freris proposed the randomized extended Kaczmarz algorithm (REK) which was introduced to handle noise in the right-hand side vector $\tilde{b}$, and has been shown to converge to the least squares solution for noisy linear systems~\cite{zouzias2013randomized, du2019tight}.~Note that the REK algorithm, and its variations~\cite{bai2019partially, wu2022two, du2020randomized}, require not only rows of the matrix $A$, but columns of $A$ as well, which may not be feasible in all applications. We refer to Table~\ref{table1} for an overview of these results.

\section{Perturbation Analysis}\label{sec:perturb}
The Moore–Penrose inverse or generalized inverse \cite{wedin1973perturbation} is well-studied and plays a crucial role in matrix theory and numerical analysis. We can use the perturbation bounds of the Moore–Penrose inverse \cite{wedin1973perturbation} of matrix $A$ to quickly arrive at some convergence bounds for RK for doubly-noisy systems but under more restrictive conditions than one may desire. In this section, we 
show how far perturbation analysis 
could take us in convergence analysis.
In contrast to our main result in Section~\ref{sec:direct}, this analysis will require stronger assumptions on the noisy matrix $E$ and, under certain conditions, obtain weaker convergence bounds than we can get by directly analyzing RK on doubly-noisy linear systems.

We start with the convergence result of RK proved by 
Strohmer and Vershynin in \cite[Theorem 2]{Strohmer2013} for the noiseless system, $Ax=b$, and by
Zouzias and Freris in \cite[Theorem 3.7]{zouzias2013randomized} for the noisy system, $A x\approx \tilde b$. Let $R = \| A^{\dagger} \|^2 \|A \|_F^2$.
\begin{theorem}\label{thm:zouzias}{\rm (\cite{Strohmer2013,zouzias2013randomized})} Set $x_0$ to be any vector in the row space of $A$, that is, $x_0\in {\rm range}(A^\top)$. 
\begin{enumerate}
    \item[(I)] Let $x_k$ be the iterate of RK applied to the noiseless system $Ax = b$. Assume that $A x= b$ is consistent and let $x_{\rm LS}=A^\dagger b$. Then
\begin{equation} \label{eq:strohmer}
 \mathbb{E}\| x_{k} - x_{{\rm LS}}\|^2 \leq \left(1-\frac{1}{R}\right)^{k} \| x_0 - x_{{\rm LS}} \|^2.  
\end{equation}
\item[(II)] Let $x_k$ be the iterate of RK applied to the noisy system $Ax \approx b+\epsilon$, where we assume that $A x= b$ is consistent with $x_{\rm LS}=A^\dagger b$. Then
\begin{equation}\label{eq:zouzias}
 \mathbb{E}\| x_{k} - x_{{\rm LS}}\|^2 \leq \left(1-\frac{1}{R}\right)^{k} \| x_0 - x_{{\rm LS}} \|^2 + \frac{\|\epsilon\|^2}{\sigma^2_{\rm min}(A)}.  
\end{equation}
\end{enumerate}
\end{theorem}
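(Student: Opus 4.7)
The plan is to reduce both parts to the same Pythagorean identity and, for Part~(II), to peel off the noise contribution as a separate additive term.

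For Part~(I), the first step is to use the consistency relation $a_{i(k)}^\top x_{\rm LS} = b_{i(k)}$ to rewrite the update~\eqref{eq:ka} (in the noiseless case) as $x_{k+1} - x_{\rm LS} = P_{i(k)}(x_k - x_{\rm LS})$, where $P_i = I - a_i a_i^\top / \|a_i\|^2$ is the orthogonal projector onto $a_i^\perp$. Applying Pythagoras then yields
\begin{equation*}
\|x_{k+1} - x_{\rm LS}\|^2 = \|x_k - x_{\rm LS}\|^2 - \frac{(a_{i(k)}^\top (x_k - x_{\rm LS}))^2}{\|a_{i(k)}\|^2}.
\end{equation*}
Conditioning on $x_k$ and taking expectation with $i(k)$ drawn according to $\mathbb{P}(i(k)=i)=\|a_i\|^2/\|A\|_F^2$ turns the subtracted term into $\|A(x_k - x_{\rm LS})\|^2/\|A\|_F^2$. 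The crucial structural fact is that $x_k - x_{\rm LS}$ remains in $\mathrm{range}(A^\top)$ for every $k$: the hypothesis forces $x_0$ into this subspace, $x_{\rm LS} = A^\dagger b$ lies there as well, and each update adds a multiple of a row of $A$. On this subspace one has $\|Av\|^2 \geq \|v\|^2/\|A^\dagger\|^2$, which gives the one-step contraction factor $1 - 1/R$; iterating and taking total expectation produces~\eqref{eq:strohmer}.

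For Part~(II), the same algebra with $b_{i(k)}+\epsilon_{i(k)}$ replacing $b_{i(k)}$ yields the affine recursion
\begin{equation*}
x_{k+1} - x_{\rm LS} = P_{i(k)}(x_k - x_{\rm LS}) + \frac{\epsilon_{i(k)}}{\|a_{i(k)}\|^2}\, a_{i(k)}.
\end{equation*}
The two summands are orthogonal (one lies in $a_{i(k)}^\perp$, the other is parallel to $a_{i(k)}$), so Pythagoras gives $\|x_{k+1} - x_{\rm LS}\|^2 = \|P_{i(k)}(x_k - x_{\rm LS})\|^2 + \epsilon_{i(k)}^2/\|a_{i(k)}\|^2$. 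The projection term contracts by $1 - 1/R$ in conditional expectation exactly as in Part~(I), while the weighted sampling reduces the noise term to $\sum_i (\|a_i\|^2/\|A\|_F^2)(\epsilon_i^2/\|a_i\|^2) = \|\epsilon\|^2/\|A\|_F^2$. Unrolling the resulting affine recursion and summing the geometric series $\sum_{j \geq 0}(1 - 1/R)^j = R$ converts the accumulated noise into the stated horizon $R \cdot \|\epsilon\|^2/\|A\|_F^2 = \|\epsilon\|^2/\sigma_{\min}^2(A)$.

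The step I expect to require the most care is the row-space invariance of $x_k - x_{\rm LS}$; without it, the passage from $\|Av\|^2$ to $\sigma_{\min}^2(A)\|v\|^2$ would be unjustified and the whole contraction argument would collapse. Once that invariance is in hand, everything else is a clean orthogonal decomposition followed by a geometric series summation, and Part~(II) is essentially Part~(I) with one extra additive, tower-property-friendly noise term.
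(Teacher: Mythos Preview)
Your argument is correct. Note, however, that the paper does not supply its own proof of this theorem: it is quoted from \cite{Strohmer2013,zouzias2013randomized} and used as a black box. That said, the paper's proof of its main result, Theorem~\ref{theorem:general_theorem}, follows precisely the template you outline here---the same projector decomposition $x_{k+1}-x_{\rm LS}=(I-\tilde a_{i}\tilde a_i^\top/\|\tilde a_i\|^2)(x_k-x_{\rm LS})+(\text{noise})\cdot\tilde a_i$, the same orthogonality/Pythagoras step, the same conditional expectation reducing the noise term to $\|\cdot\|^2/\|\tilde A\|_F^2$, and the same row-space invariance used to invoke $\sigma_{\min}$---and specializing that proof to $E=0$ reproduces your argument verbatim. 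So your proposal is both correct and in complete agreement with the method the paper itself employs.
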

%\hl{ A {\em simple approach} to obtaining convergence bounds for RK in the doubly-noisy regime is to apply Theorem~\ref{thm:zouzias} to the rearranged system, $Ax = b + \epsilon - E x$. However, this approach obtains a convergence bound in terms of the scaled condition number of $A$; it is unrealistic to have access to the original $A$. Instead, we expect the convergence rate of RK applied to the noisy linear system to depend on the scaled condition number, $\tilde R = \| \tilde{A}^{\dagger}\|_2^2 \|\tilde{A} \|_F^2$ of the noisy matrix, $\tilde{A}$.}

%Since RK cannot distinguish between $A$ and $E$, it is not reasonable to expect the convergence rate of RK applied to the noisy linear system to be based on $R = \| A^{\dagger} \|^2 \|A \|_F^2$.
{On the right-hand side of inequality (\ref{eq:zouzias}), the term $(1-\frac{1}{R})$ denotes the convergence rate, and the second term in the sum is the convergence horizon.}

In what follows, we attempt to obtain convergence bounds for RK applied to the doubly-noisy system using Theorem~\ref{thm:zouzias}, properties of the generalized inverse,  and the perturbation theory, so that the bounds are in terms of $\tilde R = \| \tilde A^{\dagger} \|^2 \|\tilde A \|_F^2$. %To that end, we begin with assuming additive noise on the coefficient matrix; in particular, suppose we have that $\tilde A = A + E$, and $\tilde b = b + \epsilon$.

First, we recall a classic, intermediate result from \cite{castro2016multiplicative, bjorck1996numerical} to quantify the difference between the least squares solution of the noiseless system, $Ax=b$, and the noisy system, $(A+E)x=b+\epsilon$. 
\begin{lemma}\label{lemma:perturb_ls} \rm (c.f.  \cite[Theorem 1.4.6]{bjorck1996numerical}, \cite{castro2016multiplicative})
Let ${\rm rank}(A)={\rm rank}(A+E)$ and $\|A^\dagger\|\|E\| < 1$. Let $Ax=b$ be consistent. Denote $x_{\rm LS}:=A^\dagger b$ and $x_{\rm NLS}:=(A+E)^\dagger(b+\epsilon).$ Then
$$\| x_{{\rm NLS}} - x_{{\rm LS}} \| \leq \frac{\|  x_{{\rm LS}} \|}{1-\|A^\dagger\|\|E\|}\left(2\|A^\dagger\|\|E\|+\frac{\|A^\dagger\|\|\epsilon\|}{\| x_{{\rm LS}} \|} \right).$$
\end{lemma}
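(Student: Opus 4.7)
The plan is to decompose the difference $x_{\rm NLS} - x_{\rm LS}$ into a part coming from the matrix perturbation $E$ and a part coming from the right-hand side perturbation $\epsilon$, bound each piece separately with pseudoinverse perturbation theory, and combine. Concretely, I would start from
\[
x_{\rm NLS} - x_{\rm LS} = \bigl[(A+E)^\dagger - A^\dagger\bigr] b + (A+E)^\dagger \epsilon,
\]
so that it suffices to control both $\|(A+E)^\dagger\|$ and $\|(A+E)^\dagger - A^\dagger\|$.

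For the first ingredient, I would establish the Banach-lemma-style bound
\[
\|(A+E)^\dagger\| \le \frac{\|A^\dagger\|}{1 - \|A^\dagger\|\,\|E\|},
\]
which is precisely where the rank-preservation assumption and the contraction hypothesis $\|A^\dagger\|\|E\| < 1$ enter. Applied to the second summand, this immediately yields $\|(A+E)^\dagger \epsilon\| \le \|A^\dagger\|\|\epsilon\|/(1-\|A^\dagger\|\|E\|)$, which matches the second term inside the parentheses of the target inequality after factoring out $\|x_{\rm LS}\|/(1-\|A^\dagger\|\|E\|)$.

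For the $E$-term, the idea is to use consistency $b = A x_{\rm LS}$ together with $x_{\rm LS} \in \mathrm{range}(A^\top)$ to rewrite
\[
\bigl[(A+E)^\dagger - A^\dagger\bigr] b = (A+E)^\dagger A\, x_{\rm LS} - x_{\rm LS},
\]
since $A^\dagger A\, x_{\rm LS} = x_{\rm LS}$. Writing $A = (A+E) - E$ and using the equal-rank assumption to justify that $(A+E)^\dagger(A+E)$ acts as the identity on $\mathrm{range}(A^\top)$, the leading piece cancels against $x_{\rm LS}$ and what remains is $-(A+E)^\dagger E\, x_{\rm LS}$, which the Banach-lemma bound controls by $\|A^\dagger\|\|E\|\|x_{\rm LS}\|/(1-\|A^\dagger\|\|E\|)$. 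The factor of $2$ in the final bound comes from including the deviation between the orthogonal projectors $(A+E)^\dagger(A+E)$ and $A^\dagger A$ in the Wedin-type estimate, which produces a second copy of $\|A^\dagger\|\|E\|\|x_{\rm LS}\|/(1-\|A^\dagger\|\|E\|)$.

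The main obstacle I expect is cleanly justifying the projection identity on $\mathrm{range}(A^\top)$: equality of ranks by itself does not force the row spaces of $A$ and $A+E$ to coincide, so the textbook route is to invoke Wedin's perturbation theorem for the Moore--Penrose inverse in the form
\[
\|(A+E)^\dagger - A^\dagger\| \le c\,\|A^\dagger\|\,\|(A+E)^\dagger\|\,\|E\|,
\]
valid under exactly the stated hypotheses. Once this is in hand, applying it to $b = A x_{\rm LS}$ (to convert $\|b\|$ into $\|A\|\|x_{\rm LS}\|$ would be wasteful; the sharper route keeps $x_{\rm LS}$ explicit) and pairing with the norm bound on $(A+E)^\dagger$ gives both summands with the common prefactor $\|x_{\rm LS}\|/(1-\|A^\dagger\|\|E\|)$, and the stated inequality follows by the triangle inequality.
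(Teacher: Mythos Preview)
The paper does not actually prove this lemma; it is quoted from \cite[Theorem~1.4.6]{bjorck1996numerical} and \cite{castro2016multiplicative} without argument, so there is no in-paper proof to compare against. Your outline is essentially the standard derivation behind those references and is sound in structure: the splitting $x_{\rm NLS}-x_{\rm LS}=[(A+E)^\dagger-A^\dagger]b+(A+E)^\dagger\epsilon$, the bound $\|(A+E)^\dagger\|\le\|A^\dagger\|/(1-\|A^\dagger\|\|E\|)$ under the rank-preserving hypothesis, and the rewriting $[(A+E)^\dagger-A^\dagger]b=[(A+E)^\dagger(A+E)-I]x_{\rm LS}-(A+E)^\dagger E\,x_{\rm LS}$ are all correct.

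One caution: your fallback at the end---invoking the operator-norm Wedin estimate $\|(A+E)^\dagger-A^\dagger\|\le c\,\|A^\dagger\|\,\|(A+E)^\dagger\|\,\|E\|$ and applying it to $b$---does \emph{not} deliver the stated bound, because it produces $\|b\|\le\|A\|\|x_{\rm LS}\|$ and hence an extra factor $\|A\|\|A^\dagger\|$. The sharp route is to finish the projector term directly: since $x_{\rm LS}\in\mathrm{range}(A^\top)$ one may write $x_{\rm LS}=A^\top w$ with $w=(AA^\top)^\dagger b=(A^\dagger)^\top x_{\rm LS}$, so $\|w\|\le\|A^\dagger\|\|x_{\rm LS}\|$; then $(I-\tilde A^\dagger\tilde A)A^\top=-(I-\tilde A^\dagger\tilde A)E^\top$ (because $(I-\tilde A^\dagger\tilde A)\tilde A^\top=0$) gives $\|(I-\tilde A^\dagger\tilde A)x_{\rm LS}\|\le\|E\|\|A^\dagger\|\|x_{\rm LS}\|$. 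This is exactly the second copy of $\|A^\dagger\|\|E\|\|x_{\rm LS}\|/(1-\|A^\dagger\|\|E\|)$ you anticipated, and the factor $2$ follows.
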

%Similarly, we can quantify the difference between $x_{{\rm LS}}$ and $x_{{\rm PNLS}}$ as follows. 

%\soumia{For lemma \ref{lemma:perturb_ls} to be true (and hence lemma \ref{lemma:perturb_pnls}), the system $Ax=b$ must be consistent. I checked reference \cite{castro2016multiplicative} and the inequality is like this $\| x_{{\rm NLS}} - x_{{\rm LS}} \| \leq \frac{\|  x_{{\rm LS}} \|}{1-\|A^\dagger\|\|E\|}\left(2\|A^\dagger\|\|E\|+\frac{\|A^\dagger\|\|\epsilon\|}{\| x_{{\rm LS}} \|}  + \| b-Ax_{{\rm LS}} \| \frac{\|A^\dagger\|^2\|E\|}{\| x_{{\rm LS}} \|} \right)$.  if $Ax=b$ is consistent, we will get $\| x_{{\rm NLS}} - x_{{\rm LS}} \| \leq \frac{\|  x_{{\rm LS}} \|}{1-\|A^\dagger\|\|E\|}\left(2\|A^\dagger\|\|E\|+\frac{\|A^\dagger\|\|\epsilon\|}{\| x_{{\rm LS}} \|} \right)$}

%\anna{I don't see value in including Theorem~\ref{theorem:ls_rk_pnls} and Theorem~\ref{theorem:ls_rk_addtivenoise} since the next result is stronger (smaller horizon) and all results have conference rates that depend on $\tilde{R}$.}

Now, we can quantify the difference between $x_{{\rm LS}}$ and the iterates generated by \eqref{eq:ka} %via $x_{{\rm NLS}}$ 
as follows. 
\begin{theorem}\label{theorem:ls_rk_addtivenoise}
Let ${\rm rank}(A)={\rm rank}(A+E)$ and $\|A^\dagger\|\|E\| < 1$. Let ${x}_{k}$ be defined in \eqref{eq:ka}. Suppose the doubly-noisy linear system, $\Tilde{A}x=\Tilde{b}$, and the noiseless system, $Ax=b$, are consistent. Then
\begin{align*}
 \textstyle{\mathbb{E} \|{x}_{k} - x_{{\rm LS}} \| \leq \left(1-\frac{1}{\tilde R}\right)^{k/2} \| x_0 - x_{{\rm NLS}} \| + \frac{\|  x_{{\rm LS}} \|}{1-\|A^\dagger\|\|E\|}\left(2\|A^\dagger\|\|E\|+\frac{\|A^\dagger\|\|\epsilon\|}{\| x_{{\rm LS}} \|} \right),}
    \end{align*}
    where $\tilde R = \| \tilde A^{\dagger} \|^2 \|\tilde A \|_F^2$,  $x_{\rm LS}=A^\dagger b$, and $x_{\rm NLS}=(A+E)^\dagger(b+\epsilon).$
\end{theorem}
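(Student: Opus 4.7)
The plan is to bound $\|x_k - x_{\rm LS}\|$ by inserting $x_{\rm NLS}$ through the triangle inequality, then estimate each resulting piece by previously stated results. Specifically, I would write
\[
\|x_k - x_{\rm LS}\| \;\le\; \|x_k - x_{\rm NLS}\| + \|x_{\rm NLS} - x_{\rm LS}\|,
\]
take expectations on both sides, and then handle the two terms separately. This decomposition is natural because RK applied to the (assumed consistent) doubly-noisy system $\tilde A x = \tilde b$ is genuinely driving the iterates toward $x_{\rm NLS} = \tilde A^\dagger \tilde b$, while $x_{\rm NLS}$ itself sits near $x_{\rm LS}$ by the perturbation theory of the pseudoinverse.

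For the first (random) term, I would invoke Theorem~\ref{thm:zouzias}(I), applied to the consistent system $\tilde A x = \tilde b$ with least squares solution $x_{\rm NLS}$, to obtain
\[
\mathbb{E}\|x_k - x_{\rm NLS}\|^2 \;\le\; \left(1 - \tfrac{1}{\tilde R}\right)^{k}\|x_0 - x_{\rm NLS}\|^2.
\]
Then Jensen's inequality (applied to the concave square root) yields
\[
\mathbb{E}\|x_k - x_{\rm NLS}\| \;\le\; \sqrt{\mathbb{E}\|x_k - x_{\rm NLS}\|^2} \;\le\; \left(1 - \tfrac{1}{\tilde R}\right)^{k/2}\|x_0 - x_{\rm NLS}\|,
\]
which produces the first summand in the claimed bound with the correct exponent of $k/2$.

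For the second, deterministic term, I would directly apply Lemma~\ref{lemma:perturb_ls}: since the hypotheses $\operatorname{rank}(A)=\operatorname{rank}(A+E)$ and $\|A^\dagger\|\|E\| < 1$ match exactly, it yields
\[
\|x_{\rm NLS} - x_{\rm LS}\| \;\le\; \frac{\|x_{\rm LS}\|}{1-\|A^\dagger\|\|E\|}\left(2\|A^\dagger\|\|E\| + \frac{\|A^\dagger\|\|\epsilon\|}{\|x_{\rm LS}\|}\right),
\]
which is the second summand in the target inequality. Combining these two estimates via the triangle inequality above finishes the argument.

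The only subtlety I anticipate is ensuring that Theorem~\ref{thm:zouzias}(I) is legitimately applicable, which requires $x_0$ to lie in $\operatorname{range}(\tilde A^\top)$; this is the standard prerequisite on the initialization under which the RK iterates stay in the row space of the operating matrix and converge to the minimum-norm solution. Everything else is essentially bookkeeping: the passage from a mean-square bound to a mean bound through Jensen's inequality (which accounts for the square-root in the convergence rate), and reusing the perturbation estimate of Lemma~\ref{lemma:perturb_ls} without modification. No new calculation is needed, so the proof is really just a careful triangle-inequality composition of two results already in the paper.
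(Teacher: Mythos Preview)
Your proposal is correct and follows essentially the same route as the paper: apply Theorem~\ref{thm:zouzias}(I) to the consistent system $\tilde A x=\tilde b$, pass from the mean-square bound to the mean bound via Jensen's inequality, and then combine with Lemma~\ref{lemma:perturb_ls} through the triangle inequality. Your observation about the implicit requirement $x_0\in\operatorname{range}(\tilde A^\top)$ is apt and matches the standing hypothesis needed for Theorem~\ref{thm:zouzias}(I).
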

\begin{proof}
{Applying the bound from (I) in Theorem~\ref{thm:zouzias} to the noisy system $\Tilde{A}x=\Tilde{b}$, which is consistent by the assumption, we obtain
\begin{eqnarray}\label{eq:JE0}
    \mathbb{E}\| x_{k} - x_{{\rm NLS}}\|^2 \leq \left(1-\frac{1}{\tilde R}\right)^{k} \| x_0 - x_{{\rm NLS}} \|^2.
\end{eqnarray}
Using Jensen's inequality we have $ \mathbb{E}\|x_k-x_{\rm NLS}\|\leq
\left(\mathbb{E}\|x_k-x_{\rm NLS}\|^2\right)^{1/2}$, which, together with (\ref{eq:JE0}), yields
\begin{eqnarray}\label{eq:JE}
    \mathbb{E}\| x_{k} - x_{{\rm NLS}}\| \leq \left(1-\frac{1}{\tilde R}\right)^{k/2} \| x_0 - x_{{\rm NLS}} \|.
\end{eqnarray}
Thus}
\begin{align*}
   \mathbb{E} \|{x}_{k} - x_{{\rm LS}} \| %&= \| {x}_{k+1} - x_{{\rm NLS}} + x_{{\rm NLS}} - x_{{\rm LS}} \|
    &\leq \mathbb{E} \| {x}_{k}  - x_{{\rm NLS}} \| + \| x_{{\rm NLS}} - x_{{\rm LS}} \|\\
    & \leq \left(1-\frac{1}{\tilde R}\right)^{k/2} \| x_0 - x_{{\rm NLS}} \| + \| x_{{\rm NLS}} - x_{{\rm LS}} \|.
\end{align*}
%\xin{Need to take expectation}
Bounding $\| x_{{\rm NLS}} - x_{{\rm LS}} \|$ by using Lemma \ref{lemma:perturb_ls} obtains the desired result.
\end{proof}

We note that for the particular case when $E=0$, we get a weaker bound compared to RK applied to the consistent system, $ Ax=\tilde b$ in Theorem~\ref{thm:zouzias} (I). This observation suggests that combining perturbation theory with triangle inequality will result in losing the sharpness of the bounds, and an alternative analysis for stronger bounds is required. We provide such analysis in Section \ref{sec:direct}.

% \soumia{In the proof we find $\left(1-\frac{1}{\tilde R}\right)^{k/2} \| x_0 - \color{blue}x_{{\rm NLS}} \|$ but in theorem 2 we have  $\left(1-\frac{1}{\tilde R}\right)^{k/2} \| x_0 - \color{blue}x_{{\rm LS}} \|$}
% \anna{In this theorem we consider the case in which \eqref{eq:mainPbLinear} is consistent so $x_{{\rm NLS}}$ coincides with $x_{{\rm LS}}$. I added an additional sentence to clarify this.}

Further, one can quantify the difference between $x_{{\rm LS}}$ and the iterates generated by \eqref{eq:ka}, via considering the intermediate solution, $x_{{\rm PNLS}}$ which is the least squares solution to the {\em partially noisy} linear system , $(A+E)x\approx b$. We start with a bound on the error between the solution to the noiseless system and the least squares solution to the partially noisy system:
%\begin{lemma} \label{lemma:perturb_pnls}\rm (c.f.  \cite[Theorem 1.4.6]{bjorck1996numerical}, \cite{castro2016multiplicative})
%Let ${\rm rank}(A)={\rm rank}(A+E)$ and $\|A^\dagger\|\|E\| < 1$. Let $Ax=b$ be consistent. Denote $x_{\rm LS}:=A^\dagger b$ and $x_{\rm PNLS}:=(A+E)^\dagger b.$ Then
%$\| x_{{\rm PNLS}} - x_{{\rm LS}} \| \leq \frac{2\|  x_{{\rm LS}} \|\|A^\dagger\|\|E\|}{1-\|A^\dagger\|\|E\|}.$
%\end{lemma}

\begin{theorem}\label{theorem:ls_rk_pnls}
Let ${\rm rank}(A)={\rm rank}(A+E)$ and $\|A^\dagger\|\|E\| < 1$. Let ${x}_{k+1}$ be defined in \eqref{eq:ka}. Suppose the partially noisy linear system, $\tilde A x=b$,  and the noiseless system, $Ax=b$, are consistent. Then
\begin{align*}
   \mathbb{E} \|{x}_{k}  - x_{{\rm LS}} \| &\leq \left(1-\frac{1}{ \tilde R}\right)^{k/2} \| x_0  - x_{{\rm PNLS}}\| + \frac{2\|  x_{{\rm LS}} \|\|A^\dagger\|\|E\|}{1-\|A^\dagger\|\|E\|} + \frac{\|\epsilon\|}{\sigma_{\min}(\tilde{A})},
    \end{align*}
    where $ \tilde R = \| \tilde A^{\dagger} \|^2 \|\tilde A \|_F^2$,  $x_{\rm LS}=A^\dagger b$, and $x_{\rm PNLS}:=(A+E)^\dagger b.$
\end{theorem}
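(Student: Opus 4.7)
The plan is to decompose the error through the intermediate ``partially noisy'' least squares solution $x_{{\rm PNLS}}=\tilde A^\dagger b$ and bound the two pieces separately, mirroring the structure of the proof of Theorem~\ref{theorem:ls_rk_addtivenoise} but now using the full Theorem~\ref{thm:zouzias}(II) rather than part (I) in the first step. By the triangle inequality,
\begin{equation*}
\mathbb{E}\|x_k - x_{{\rm LS}}\| \;\le\; \mathbb{E}\|x_k - x_{{\rm PNLS}}\| \;+\; \|x_{{\rm PNLS}} - x_{{\rm LS}}\|,
\end{equation*}
so it suffices to handle each term.

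For the first term, the iterates $x_k$ are produced by RK applied to $\tilde A x\approx \tilde b = b+\epsilon$, and by hypothesis the partially noisy system $\tilde A x = b$ is consistent with minimal-norm solution $x_{{\rm PNLS}}$. This is exactly the scenario of Theorem~\ref{thm:zouzias}(II), applied with the role of ``$A$'' played by $\tilde A$ and with right-hand side perturbation $\epsilon$. Thus
\begin{equation*}
\mathbb{E}\|x_k - x_{{\rm PNLS}}\|^2 \;\le\; \left(1-\frac{1}{\tilde R}\right)^{k}\|x_0 - x_{{\rm PNLS}}\|^2 + \frac{\|\epsilon\|^2}{\sigma_{\min}^2(\tilde A)}.
\end{equation*}
Jensen's inequality gives $\mathbb{E}\|x_k - x_{{\rm PNLS}}\|\le(\mathbb{E}\|x_k - x_{{\rm PNLS}}\|^2)^{1/2}$, and then the elementary inequality $\sqrt{a+b}\le \sqrt{a}+\sqrt{b}$ for nonnegative $a,b$ yields
\begin{equation*}
\mathbb{E}\|x_k - x_{{\rm PNLS}}\| \;\le\; \left(1-\frac{1}{\tilde R}\right)^{k/2}\|x_0 - x_{{\rm PNLS}}\| + \frac{\|\epsilon\|}{\sigma_{\min}(\tilde A)}.
\end{equation*}

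For the second term, I compare the noiseless solution $x_{{\rm LS}}=A^\dagger b$ to the partially noisy solution $x_{{\rm PNLS}}=(A+E)^\dagger b$, i.e.\ the perturbation is only in the matrix. The rank assumption $\mathrm{rank}(A)=\mathrm{rank}(A+E)$ and the smallness assumption $\|A^\dagger\|\|E\|<1$ are precisely the hypotheses of Lemma~\ref{lemma:perturb_ls}, which, specialized to $\epsilon=0$, gives
\begin{equation*}
\|x_{{\rm PNLS}} - x_{{\rm LS}}\| \;\le\; \frac{2\|x_{{\rm LS}}\|\,\|A^\dagger\|\,\|E\|}{1-\|A^\dagger\|\,\|E\|}.
\end{equation*}
Substituting both bounds back into the triangle inequality yields the claimed estimate.

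The main conceptual point, rather than an obstacle, is making sure the appropriate version of Theorem~\ref{thm:zouzias} is invoked at the right place: applying part~(II) to the pair $(\tilde A x = b,\,\tilde A x\approx b+\epsilon)$ is what makes the convergence horizon $\|\epsilon\|/\sigma_{\min}(\tilde A)$ (depending on $\tilde A$ rather than on $A$) appear, and it is what distinguishes this argument from the proof of Theorem~\ref{theorem:ls_rk_addtivenoise}. One mild technical point is that Theorem~\ref{thm:zouzias} is stated for initializations in the row space of the relevant matrix, so the theorem implicitly requires $x_0\in\mathrm{range}(\tilde A^\top)$; this is inherited from the underlying RK convergence result and does not require extra work. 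Everything else is a routine combination of Jensen's inequality, subadditivity of the square root, and the triangle inequality.
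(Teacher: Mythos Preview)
Your proposal is correct and follows essentially the same approach as the paper: decompose via the triangle inequality through $x_{\rm PNLS}$, apply Theorem~\ref{thm:zouzias}(II) to the pair $(\tilde A x=b,\ \tilde A x\approx b+\epsilon)$ together with Jensen and subadditivity of the square root for the first term, and invoke Lemma~\ref{lemma:perturb_ls} with $\epsilon=0$ for the second term.
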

\begin{proof}
The result can be obtained by using the triangle inequality and then using Theorem \ref{thm:zouzias} and {Lemma \ref{lemma:perturb_ls} with $\epsilon = 0$.}
%We have
%\begin{align*}
%    \mathbb{E} \|{x}_{k+1} - x_{{\rm LS}} \| &\leq  \mathbb{E} \| {x}_{k+1} - x_{{\rm PNLS}} \| + \| x_{{\rm PNLS}} - x_{{\rm LS}} \|\\
%    &\leq  \left(1-\frac{1}{ \tilde R}\right)^{k/2} \| x_0 - x_{{\rm PNLS}} \| + \frac{\|\epsilon\|}{\sigma_{\min}(\tilde{A})} + \| x_{{\rm PNLS}} - x_{{\rm LS}} \|.
%\end{align*}
%The first term uses the bound in Theorem~\ref{thm:zouzias}.~Bounding $\| x_{{\rm PNLS}} - x_{{\rm LS}} \|$ by using  Lemma \ref{lemma:perturb_pnls}, we get the result.
\end{proof}

\section{Main Results}\label{sec:direct}
We obtained the results in Section~\ref{sec:perturb} by using the known estimate of the least squares solutions of the doubly-noisy and the noiseless systems. Although we arrived at some interesting bounds, Theorem \ref{theorem:ls_rk_addtivenoise} and \ref{theorem:ls_rk_pnls} are restrictive due to two reasons: \myNum{i} {In both theorems, we require the noisy systems, doubly or partial, to be consistent, which may not be realistic and too restrictive}. \myNum{ii} In both theorems, we require the matrices, $A$ and $A+E$ to have the same rank, i.e., $E$ cannot change the rank of $A$, and connected to the original matrix $A$ via $\|A^\dagger\|\|E\| < 1$. The rank change may be a restrictive condition, especially if $A$ is only approximately low rank.
%\textcolor{blue}{what do we mean by this? not to let $\tilde{A}^\dagger$ to grow unboundedly, and connected to the original matrix $A$ via $\|A^\dagger\|\|E\| < 1$}. 
Therefore, a natural question is: {\em Can we avoid these issues altogether?} Theorem \ref{theorem:general_theorem} answers this. 

The result in Theorem \ref{theorem:general_theorem} directly connects the iterates, $x_k$, of RK applied to the doubly-noisy linear system, $\Tilde{A}x \approx \Tilde{b}$ to $x_{\rm LS}$ without going through either $x_{\rm NLS}$ or $x_{\rm PNLS}$. Thus, it will not need the consistency of the noisy system. We only require {\em consistency} of the original noiseless system, $Ax=b$, which is a mild condition. Moreover, we dispense any restriction on the noise term, that is, $E$ does not need to be acute, and $\|A^\dagger\|\|E\| < 1$ does not need to hold. An immediate consequence is that we do not have to worry about the rank change of $\tilde{A}$ due to noisy perturbation, $E$.

\begin{theorem} \label{theorem:general_theorem} Let $x_k$ be the iterate of RK applied to the doubly-noisy linear system, $\Tilde{A}x \approx \Tilde{b}$, with {$x_0-x_{\rm LS}\in {\rm range}(\tilde{A}^\top)$}. Assume the noiseless system $Ax=b$ is consistent with $x_{\rm LS}=A^\dagger b$. Then
\begin{equation}
 \mathbb{E}\| x_{k} - x_{{\rm LS}}\|^2 \leq \left(1-\frac{1}{\Tilde{R}}\right)^{k} \| x_0 - x_{{\rm LS}} \|^2 + \frac{\| E x_{\rm LS} - \epsilon \|^2}{\sigma_{\rm min}^2(\Tilde{A})},  
\end{equation}
where 
$\tilde R = \| \tilde A^{\dagger} \|^2 \|\tilde A \|_F^2$.
\end{theorem}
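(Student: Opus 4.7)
The approach is to mirror the classical Strohmer--Vershynin analysis but applied to the noisy system $\tilde A x \approx \tilde b$ about the point $x_{\rm LS}$, which is not a solution of the noisy system. The starting observation is that, because $A x_{\rm LS} = b$, the residual of $x_{\rm LS}$ in the noisy system collapses to
\[
\tilde A x_{\rm LS} - \tilde b = (A+E)x_{\rm LS} - (b+\epsilon) = E x_{\rm LS} - \epsilon.
\]
This single vector captures \emph{both} noise sources and will drive the convergence horizon.

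Next, I would subtract $x_{\rm LS}$ from the RK recursion \eqref{eq:ka} to obtain
\[
x_{k+1} - x_{\rm LS} = \Bigl(I - \tfrac{\tilde a_{i(k)}\tilde a_{i(k)}^\top}{\|\tilde a_{i(k)}\|^2}\Bigr)(x_k - x_{\rm LS}) - \tfrac{\tilde a_{i(k)}^\top x_{\rm LS} - \tilde b_{i(k)}}{\|\tilde a_{i(k)}\|^2}\,\tilde a_{i(k)}.
\]
Since the first summand is orthogonal to $\tilde a_{i(k)}$ and the second is a multiple of $\tilde a_{i(k)}$, the Pythagorean identity gives an exact split of $\|x_{k+1} - x_{\rm LS}\|^2$. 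Taking expectation over $i(k)$ sampled with the standard probability $\|\tilde a_i\|^2/\|\tilde A\|_F^2$ (which is precisely what produces $\tilde R$ in the conclusion), the sum of squared scalar residuals telescopes to $\|\tilde A x_{\rm LS} - \tilde b\|^2/\|\tilde A\|_F^2 = \|E x_{\rm LS} - \epsilon\|^2/\|\tilde A\|_F^2$, and the averaged projection term becomes $\|x_k - x_{\rm LS}\|^2 - \|\tilde A (x_k - x_{\rm LS})\|^2 / \|\tilde A\|_F^2$.

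The crucial step, and the main obstacle, is to lower bound $\|\tilde A (x_k - x_{\rm LS})\|^2 \geq \sigma_{\min}^2(\tilde A)\,\|x_k - x_{\rm LS}\|^2$. This requires $x_k - x_{\rm LS} \in {\rm range}(\tilde A^\top)$, which holds at $k=0$ by hypothesis, and I would verify that it is preserved by the update: both summands above lie in ${\rm range}(\tilde A^\top)$, since this subspace contains every row $\tilde a_{i(k)}$ of $\tilde A$ and is invariant under the projector $I - \tilde a_{i(k)}\tilde a_{i(k)}^\top/\|\tilde a_{i(k)}\|^2$. Using $\sigma_{\min}^2(\tilde A)/\|\tilde A\|_F^2 = 1/\tilde R$ then yields the one-step recursion
\[
\mathbb{E}\|x_{k+1} - x_{\rm LS}\|^2 \leq \Bigl(1 - \tfrac{1}{\tilde R}\Bigr)\mathbb{E}\|x_k - x_{\rm LS}\|^2 + \frac{\|E x_{\rm LS} - \epsilon\|^2}{\|\tilde A\|_F^2}.
\]

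Finally, unrolling $k$ times and summing the geometric series $\sum_{j=0}^{k-1}(1 - 1/\tilde R)^j \leq \tilde R$ converts the inhomogeneous term into $\|E x_{\rm LS} - \epsilon\|^2/\sigma_{\min}^2(\tilde A)$, matching the stated bound. Apart from the invariance verification, which is what genuinely requires the hypothesis $x_0 - x_{\rm LS} \in {\rm range}(\tilde A^\top)$, everything else is a routine Pythagoras-plus-expectation calculation. Notably, no rank condition relating $A$ and $\tilde A$ and no consistency of the noisy system ever enters, in line with the remarks preceding the theorem.
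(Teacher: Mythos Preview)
Your proposal is correct and follows essentially the same route as the paper: rewrite the RK update about $x_{\rm LS}$ using $\tilde a_{i(k)}^\top x_{\rm LS}-\tilde b_{i(k)}=E_{i(k)}x_{\rm LS}-\epsilon_{i(k)}$, exploit the orthogonal (Pythagorean) split between the projection term and the residual term, take conditional expectation with row-norm probabilities, invoke $x_k-x_{\rm LS}\in{\rm range}(\tilde A^\top)$ to bound the projection term by $(1-1/\tilde R)\|x_k-x_{\rm LS}\|^2$, and iterate. Your write-up is in fact slightly more explicit than the paper's in justifying the inductive invariance of the range condition and in spelling out the geometric-series summation.
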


\begin{proof}
We start with (\ref{eq:ka}) and write
\begin{align*}
 x_{k+1} & = x_k-\frac{\tilde{a}^\top_{i(k)}x_k-\tilde{b}_{i(k)}}{\|\tilde{a}_{i(k)}\|^2}\tilde{a}_{i(k)} \\
%& = x_k-\frac{\tilde{a}^\top_{i(k)}x_k-(Ax_{\rm LS}-b)_{i(k)}-\tilde{b}_{i(k)}}{\|\tilde{a}_{i(k)}\|^2}\tilde{a}_{i(k)}   \\
%& = x_k-\frac{\tilde{a}^\top_{i(k)}x_k-a_{i(k)}^\top x_{\rm LS}-\epsilon_{i(k)}}{\|\tilde{a}_{i(k)}\|^2}\tilde{a}_{i(k)} \\
& = x_k-\frac{\tilde{a}^\top_{i(k)}(x_k-x_{\rm LS})+E_{i(k)}x_{\rm LS}-\epsilon_{i(k)}}{\|\tilde{a}_{i(k)}\|^2}\tilde{a}_{i(k)}.
\end{align*}
%$$x_{k+1}=x_k-\frac{\tilde{a}^\top_{i(k)}x_k-\tilde{b}_{i(k)}}{\|\tilde{a}_{i(k)}\|^2}\tilde{a}_{i(k)}
%=x_k-\frac{\tilde{a}^\top_{i(k)}x_k-(Ax_{\rm LS}-b)_{i(k)}-\tilde{b}_{i(k)}}{\|\tilde{a}_{i(k)}\|^2}\tilde{a}_{i(k)},
%$$
%which equals
%$$
%x_k-\frac{\tilde{a}^\top_{i(k)}x_k-a_{i(k)}^\top x_{\rm LS}-\epsilon_{i(k)}}{\|\tilde{a}_{i(k)}\|^2}\tilde{a}_{i(k)}
%=
%x_k-\frac{\tilde{a}^\top_{i(k)}(x_k-x_{\rm LS})+E_{i(k)}x_{\rm LS}-\epsilon_{i(k)}}{\|\tilde{a}_{i(k)}\|^2}\tilde{a}_{i(k)}.
%$$
Thus, we have
\begin{eqnarray*}
x_{k+1}-x_{\rm LS}
&=&x_k-x_{\rm LS}-\frac{\tilde{a}^\top_{i(k)}(x_k-x_{\rm LS})+E_{i(k)}x_{\rm LS}-\epsilon_{i(k)}}{\|\tilde{a}_{i(k)}\|^2}\tilde{a}_{i(k)}\\
&=&
\left(I-\frac{\tilde{a}_{i(k)}\tilde{a}^\top_{i(k)}}{\|\tilde{a}_{i(k)}\|^2}\right)(x_k-x_{\rm LS}){-}\frac{E_{i(k)}x_{\rm LS}-\epsilon_{i(k)}}{\|\tilde{a}_{i(k)}\|^2}\tilde{a}_{i(k)}.  
\end{eqnarray*}
Note that the two terms, $\left(I-\frac{\tilde{a}_{i(k)}\tilde{a}^\top_{i(k)}}{\|\tilde{a}_{i(k)}\|^2}\right)(x_k-x_{\rm LS})$ and $\frac{E_{i(k)}x_{\rm LS}-\epsilon_{i(k)}}{\|\tilde{a}_{i(k)}\|^2}\tilde{a}_{i(k)}$ are orthogonal; moreover, the matrix, $\left(I-\frac{\tilde{a}_{i(k)}\tilde{a}^\top_{i(k)}}{\|\tilde{a}_{i(k)}\|^2}\right)$ is a projection matrix.
That is, $\left(I-\frac{\tilde{a}_{i(k)}\tilde{a}^\top_{i(k)}}{\|\tilde{a}_{i(k)}\|^2}\right)^2=I-\frac{\tilde{a}_{i(k)}\tilde{a}^\top_{i(k)}}{\|\tilde{a}_{i(k)}\|^2}$. Hence, we obtain
\begin{equation*}
    \|x_{k+1}-x_{\rm LS}\|^2=
\left\|\left(I-\frac{\tilde{a}_{i(k)}\tilde{a}^\top_{i(k)}}{\|\tilde{a}_{i(k)}\|^2}\right)(x_k-x_{\rm LS})\right\|^2+\left\|\frac{E_{i(k)}x_{\rm LS}-\epsilon_{i(k)}}{\|\tilde{a}_{i(k)}\|^2}\tilde{a}_{i(k)}\right\|^2,
\end{equation*}
which can be simplified to
\begin{equation*}
    \|x_{k+1}-x_{\rm LS}\|^2=
(x_k-x_{\rm LS})^\top\left(I-\frac{\tilde{a}_{i(k)}\tilde{a}^\top_{i(k)}}{\|\tilde{a}_{i(k)}\|^2}\right)(x_k-x_{\rm LS})+\frac{(E_{i(k)}x_{\rm LS}-\epsilon_{i(k)})^2}{\|\tilde{a}_{i(k)}\|^2}.
\end{equation*}
Now, taking expectation on $i(k)$ conditioning with $x_k$ to get
\begin{equation*}
  \E{\|x_{k+1}-x_{\rm LS}\|^2|x_k}=
(x_k-x_{\rm LS})^\top\left(I-\frac{\tilde{A}^\top\tilde{A}}{\|\tilde{A}\|_F^2}\right)(x_k-x_{\rm LS})+\frac{\|Ex_{\rm LS}-\epsilon\|^2}{\|\tilde{A}\|_F^2}.  
\end{equation*}

Note that, for calculating expectation in the last term, {the probability of sampling row $i$ is proportional to the row norm of the given noisy matrix $\tilde{A}$, i.e., $p_{i(k)}=\frac{\|\tilde{a}_{i(k)}\|^2}{\sum_{i(k)=1}^m\|\tilde{a}_{i(k)}\|^2}=\frac{\|\tilde{a}_{i(k)}\|^2}{\|\tilde{A}\|^2}$.}
This sampling rule is similar to that of Strohmer and Vershynin \cite{Strohmer2013}. 

Since $x_0-x_{\rm LS}\in {\rm range}(\tilde{A}^\top)$, which would imply $x_k-x_{\rm LS}\in {\rm range}(\tilde{A}^\top)$, the first term could be bounded by 
\begin{equation*}
   \left(1-\frac{{\sigma}_{\rm min}^2(\tilde{A})}{\|\tilde{A}\|_F^2}\right)\|x_k-x_{\rm LS}\|^2. 
\end{equation*}

Repeatedly using the inequality completes the proof.
\end{proof}

\begin{remark} In our analysis, the quantity, $\tilde R=\| \tilde A^{\dagger} \|^2 \|\tilde A \|_F^2$, influences the convergence of noisy RK, not $R=\|A^{\dagger} \|^2 \|A \|_F^2.$ This is expected as the convergence rate of RK applied to doubly-noisy linear systems will depend on the scaled conditioning of the noisy system, not the noiseless system since RK has no way to distinguish between $A$ and $\tilde{A}$ within its iterates.
%The result in Theorem \ref{theorem:general_theorem} is more robust than the current state-of-the-art. In our analysis, on one hand, the quantity $\tilde R=\| \tilde A^{\dagger} \|_2^2 \|\tilde A \|_F^2$, influences the convergence of noisy RK, not $R=\|A^{\dagger} \|_2^2 \|A \|_F^2.$ On the other hand, the technical assumption in Theorem \ref{theorem:general_theorem} is more realistic, as it requires the original system, $Ax=b$, to be consistent, not the noisy system. 
\end{remark}

We need an additional proposition to compare the convergence horizon of Theorem~\ref{theorem:general_theorem} and Theorem~\ref{theorem:ls_rk_pnls}. Weyl's inequality \cite{horn2012matrix} gives the perturbation bound between a Hermitian matrix and its perturbed form in terms of their eigenvalues. However, for any general matrix, the following singular value perturbation result can be obtained from Weyl’s inequalities for eigenvalues of Hermitian matrices. 

\begin{proposition}\cite[Corollary 7.3.5.]{horn2012matrix}\label{prop:singular_value_perturb}
Let $A,E\in\mathbb{C}^{m\times n}$ and $r=\min\{m,n\}.$ Let $\sigma_1(A)\ge\cdots\ge\sigma_r(A),$
and $\sigma_1(A+E) \ge \cdots\ge \sigma_r(A+E)$ be the nonincreasingly ordered singular values of $A$ and $A+E$, respectively. Then $|\sigma_i(A+E)-\sigma_i(A)|\le \| |E|\|$ for $1\le i\le r.$
\end{proposition}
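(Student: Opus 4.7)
The plan is to reduce the singular value perturbation statement to the Hermitian eigenvalue perturbation already known as Weyl's inequality, via the classical Jordan--Wielandt (Hermitian dilation) construction. For any $M \in \mathbb{C}^{m \times n}$, I would introduce the Hermitian matrix
\begin{equation*}
H(M) \;=\; \begin{pmatrix} 0 & M \\ M^{*} & 0 \end{pmatrix} \in \mathbb{C}^{(m+n)\times(m+n)},
\end{equation*}
and record the standard fact that the nonzero eigenvalues of $H(M)$ are exactly $\pm \sigma_{1}(M), \ldots, \pm\sigma_{r}(M)$, with the remaining $|m-n|$ eigenvalues equal to zero. Writing these eigenvalues in nonincreasing order, the $i$-th largest eigenvalue of $H(M)$ equals $\sigma_{i}(M)$ for $1 \le i \le r$.

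Next, I would observe that the dilation is linear, so $H(A+E) = H(A) + H(E)$, which puts us in the setting of additive Hermitian perturbation. Applying Weyl's inequality (the Hermitian version, which is the ingredient Horn--Johnson use) to $H(A)$ and $H(A)+H(E)$ yields, for each $1 \le i \le r$,
\begin{equation*}
\bigl|\lambda_{i}(H(A+E)) - \lambda_{i}(H(A))\bigr| \;\le\; \|H(E)\|,
\end{equation*}
where $\|\cdot\|$ denotes the spectral norm. Translating back via the identification of the top $r$ eigenvalues with the singular values gives
\begin{equation*}
|\sigma_{i}(A+E) - \sigma_{i}(A)| \;\le\; \|H(E)\|.
\end{equation*}

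The remaining step is to simplify $\|H(E)\|$. A direct calculation shows $H(E)^{2} = \operatorname{diag}(EE^{*},\, E^{*}E)$, whose spectral norm equals $\sigma_{1}(E)^{2} = \|E\|^{2}$, so $\|H(E)\| = \|E\|$, matching the right-hand side in the statement (interpreting $\||E|\|$ as the spectral norm, as in Horn--Johnson). The main obstacle, such as it is, lies in carefully matching up the ordering of eigenvalues of $H(A)$ and $H(A+E)$ with the singular values, particularly when $m \ne n$ and extra zero eigenvalues appear: one must ensure that the perturbation bound is applied to corresponding indices rather than being spoiled by the artificial zeros. Once that bookkeeping is handled, the proof reduces entirely to Weyl's inequality plus a norm computation, and no further estimates are needed.
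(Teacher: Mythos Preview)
Your proposal is correct and aligns with what the paper indicates: the paper does not give a detailed proof of this proposition but simply cites Horn--Johnson and remarks that the result ``can be obtained from Weyl's inequalities for eigenvalues of Hermitian matrices.'' Your Jordan--Wielandt dilation argument is precisely the standard route for turning that remark into a proof, and the bookkeeping you flag (that the top $r$ eigenvalues of $H(M)$ in nonincreasing order coincide with $\sigma_1(M),\dots,\sigma_r(M)$ even when extra zeros are present) is the only point requiring care.
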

For our case, $A,E\in\mathbb{R}^{m\times n}$. From the above proposition, if ${\rm rank}(A)={\rm rank}(A+E)$, for $1\le i\le r$, we have $-\|E\|\le \sigma_i(A+E)-\sigma_i(A)\le \|E\|,$ which gives $  {\sigma}_{\rm min}({A}) - \|E \| \leq {\sigma}_{\rm min}(\tilde{A}) < 2{\sigma}_{\rm min}(\tilde{A}).$ %Note that, the condition on the singular values of $A$ and $A+E$ in Proposition \ref{prop:singular_value_perturb} indicates $E$ is an acute perturbation; see Assumption \ref{assumption:rank}. 
Therefore, under this scenario, we can compare the convergence horizon of Theorem~\ref{theorem:general_theorem} and Theorem~\ref{theorem:ls_rk_pnls}. 

\begin{remark}\label{rem:better_horizon}
The convergence horizon resulting from Theorem~\ref{theorem:general_theorem} is (without considering the squares)\footnote{We can get directly the unsquared form of the inequality by using Jensen's inequality. }: 
\begin{equation*}
    \frac{\| Ex_{\rm LS} - \epsilon\|}{{\sigma}_{\rm min}(\tilde{A})}.
\end{equation*}
If 
%$E$ follows Assumption \ref{assumption:rank} then 
$2{\sigma}_{\rm min}(\tilde{A}) > {\sigma}_{\rm min}({A}) - \|E \|$, we can arrive at a better convergence horizon in Theorem~\ref{theorem:general_theorem}  than that of Theorem~\ref{theorem:ls_rk_pnls}. Comparing the bounds we have that the horizon in Theorem~\ref{theorem:ls_rk_pnls} is:
\begin{equation*}
    \frac{2 \|x_{\rm LS} \| \| E \|}{ {\sigma}_{\rm min}({A}) - \| E \| } + \frac{\| \epsilon \|}{{\sigma}_{\rm min}(\tilde{A})}  
\end{equation*}
Thus, we have that:
\begin{align*}
    \frac{\| Ex_{\rm LS} - \epsilon\|}{{\sigma}_{\rm min}(\tilde{A})} \leq \frac{ \| E \| \|x_{\rm LS}\|}{{\sigma}_{\rm min}(\tilde{A})} + \frac{\| \epsilon\|}{{\sigma}_{\rm min}(\tilde{A})} \leq \frac{2 \| E \| \|x_{\rm LS}\|}{{\sigma}_{\rm min}({A}) - \| E \| } + \frac{\| \epsilon\|}{{\sigma}_{\rm min}({\Tilde{A}})}. 
\end{align*}
\end{remark}

When there is no noise in the coefficient matrix, that is, $E=0$, Theorem \ref{theorem:general_theorem} obtains the same convergence bounds as Theorem 3.7 in \cite{zouzias2013randomized}; see Theorem \ref{thm:zouzias} (II) in this work.
\begin{corollary}\label{corr:general_theorem} Set $E=0$. Let $x_0\in {\rm range}({A}^\top)$ and assume the noiseless system $Ax=b$ is consistent. Then
\begin{equation}
 \mathbb{E}\| x_{k} - x_{{\rm LS}}\|^2 \leq \left(1-\frac{1}{{R}}\right)^{k} \| x_0 - x_{{\rm LS}} \|^2 + \frac{\|\epsilon \|^2}{\sigma_{\rm min}^2({A})}, 
\end{equation}
where 
${R} = \|{A}^{\dagger} \|^2 \|{A}\|_F^2$.  
\end{corollary}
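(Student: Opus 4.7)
The plan is to obtain the corollary as an immediate specialization of Theorem \ref{theorem:general_theorem} by substituting $E = 0$ and confirming that every ingredient translates correctly. With $E = 0$, the noisy coefficient matrix reduces to $\tilde{A} = A$, so the scaled conditioning satisfies $\tilde{R} = \|\tilde{A}^\dagger\|^2 \|\tilde{A}\|_F^2 = \|A^\dagger\|^2 \|A\|_F^2 = R$, and $\sigma_{\rm min}(\tilde{A}) = \sigma_{\rm min}(A)$. Likewise, the mixed noise term in the convergence horizon of Theorem \ref{theorem:general_theorem} simplifies as $\|Ex_{\rm LS} - \epsilon\|^2 = \|\epsilon\|^2$.

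Next, I would translate the initialization hypothesis. Theorem \ref{theorem:general_theorem} requires $x_0 - x_{\rm LS} \in {\rm range}(\tilde{A}^\top)$, which under $E = 0$ becomes $x_0 - x_{\rm LS} \in {\rm range}(A^\top)$. Since $x_{\rm LS} = A^\dagger b$ is the minimum-norm least-squares solution, a standard property of the Moore--Penrose inverse places $x_{\rm LS}$ in ${\rm range}(A^\top)$. Combined with the standing assumption $x_0 \in {\rm range}(A^\top)$, this yields $x_0 - x_{\rm LS} \in {\rm range}(A^\top)$, so the hypothesis of Theorem \ref{theorem:general_theorem} is satisfied.

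Invoking Theorem \ref{theorem:general_theorem} with these substitutions then produces the claimed bound directly. There is essentially no obstacle; the only point that requires a moment of verification is the membership $x_{\rm LS} \in {\rm range}(A^\top)$, which is routine. As noted in the paragraph preceding the corollary, this recovers exactly the Zouzias--Freris bound stated as Theorem \ref{thm:zouzias}(II), confirming that our analysis is a strict generalization of the single-sided noise setting.
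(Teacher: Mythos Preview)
Your proposal is correct and matches the paper's approach: the corollary is stated without proof as an immediate specialization of Theorem~\ref{theorem:general_theorem} with $E=0$. Your extra care in checking that $x_0\in{\rm range}(A^\top)$ implies $x_0-x_{\rm LS}\in{\rm range}(A^\top)$ (via $x_{\rm LS}=A^\dagger b\in{\rm range}(A^\top)$) is a detail the paper leaves implicit, but it is exactly the right verification.
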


\paragraph{Multiplicative Noise} As a consequence of Theorem~\ref{theorem:general_theorem}, we can immediately obtain bounds for multiplicative noise. 
For multiplicative noise, we consider the following linear system 
\begin{eqnarray}\label{eq:multiplicative_noise}
(I_m+E)A(I_n+F)x \approx b+ {\epsilon},  
\end{eqnarray}
where $(I_m+E)$ and $(I_n+F)$ are nonsingular. 
In this case, we cast: $\Tilde{A}  = (I_m+E)A(I_n+F) = A + EA + AF + EAF = A + \Delta A,$ where $\Delta A = EA + AF + EAF$ and $\tilde{b}=b+\epsilon$.

\begin{corollary}{(Multiplicative Noise)}\label{theorem:general_theorem_multiplicative} Let $x_k$ be the iterate of RK applied to the doubly-noisy linear system, $\Tilde{A}x \approx \Tilde{b}$, with $x_0-x_{\rm LS}\in {\rm range}(\tilde{A}^\top)$ and $\Tilde{A}  = (I_m+E)A(I_n+F) = A + EA + AF + EAF = A + \Delta A,$ such that $(I_m+E)$ and $(I_n+F)$ are nonsingular. Assume the noiseless system $Ax=b$ is consistent. Then
\begin{equation}
 \mathbb{E}\| x_{k} - x_{{\rm LS}}\|^2 \leq \left(1-\frac{1}{\Tilde{R}}\right)^{k} \| x_0 - x_{{\rm LS}} \|^2 + \frac{\| \Delta A x_{\rm LS} - \epsilon \|^2}{\sigma_{\rm min}^2(\Tilde{A})},  
\end{equation}
where 
$\tilde R = \| \tilde A^{\dagger} \|^2 \|\tilde A \|_F^2$.
   \end{corollary}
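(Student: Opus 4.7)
The plan is to reduce Corollary~\ref{theorem:general_theorem_multiplicative} to a direct application of Theorem~\ref{theorem:general_theorem} by re-interpreting the multiplicative perturbation as an additive one. Writing $\tilde{A} = (I_m+E)A(I_n+F) = A + \Delta A$ with $\Delta A := EA + AF + EAF$ places us exactly in the additive-noise setting $\tilde{A} = A + (\Delta A)$, with the same right-hand-side perturbation $\tilde{b} = b + \epsilon$. Since Theorem~\ref{theorem:general_theorem} imposes no condition on the noise term beyond the two structural assumptions already present here (consistency of the underlying noiseless system $Ax=b$ and $x_0 - x_{\rm LS}\in{\rm range}(\tilde{A}^\top)$), no rank-preservation or smallness hypothesis on $E$, $F$, or $\Delta A$ is required.

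The key step I would verify before invoking Theorem~\ref{theorem:general_theorem} is that its proof uses the additive noise matrix only through the identity
\begin{equation*}
\tilde{a}_{i(k)}^\top x_{\rm LS} - \tilde{b}_{i(k)} = (\Delta A)_{i(k)}\, x_{\rm LS} - \epsilon_{i(k)},
\end{equation*}
which holds here because $a_{i(k)}^\top x_{\rm LS} = b_{i(k)}$ by consistency of $Ax=b$, and $\tilde{a}_{i(k)}^\top = a_{i(k)}^\top + (\Delta A)_{i(k)}$ by construction. Once this identity is in place, the proof of Theorem~\ref{theorem:general_theorem} proceeds verbatim with $E$ replaced by $\Delta A$, producing the convergence horizon $\|\Delta A\, x_{\rm LS} - \epsilon\|^2 / \sigma_{\min}^2(\tilde{A})$ and the rate $(1-1/\tilde{R})$ with $\tilde{R} = \|\tilde{A}^\dagger\|^2\|\tilde{A}\|_F^2$, as stated.

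I do not anticipate a main obstacle: the corollary is essentially a notational specialization. The only subtlety worth noting in the write-up is that the nonsingularity of $(I_m+E)$ and $(I_n+F)$ is not needed for the inequality itself, but it guarantees that $\tilde{A}$ has the same rank as $A$, so that $\sigma_{\min}(\tilde{A})>0$ and the convergence horizon is finite and meaningful. Accordingly, my proof will be a two or three line argument: define $\Delta A$, observe $\tilde{A} = A + \Delta A$, apply Theorem~\ref{theorem:general_theorem} with noise matrix $\Delta A$, and read off the stated bound.
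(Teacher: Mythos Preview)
Your proposal is correct and matches the paper's approach exactly: the paper presents this corollary as an immediate consequence of Theorem~\ref{theorem:general_theorem}, obtained by writing $\tilde{A} = A + \Delta A$ with $\Delta A = EA + AF + EAF$ and invoking the theorem with $\Delta A$ in place of the additive noise $E$. Your additional remark on the role of the nonsingularity hypothesis is a reasonable observation, though note that in the paper's notation $\sigma_{\min}(\tilde{A})$ denotes the smallest \emph{nonzero} singular value, so it is automatically positive whenever $\tilde{A}\neq 0$; the rank preservation is more relevant for ensuring the range condition on $x_0 - x_{\rm LS}$ can be met.
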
 

\begin{remark}
%With proper modifications on the previously mentioned {\em reference systems} for multiplicative noise (see Section \ref{sec:notation}),
Similar to the additive noise case, by using perturbation analysis, we can quantify the difference between $x_{{\rm LS}}$ and the iterates generated by \eqref{eq:ka} via $x_{{\rm NLS}}$. We can use Theorem 4.1 from \cite{castro2016multiplicative} that quantifies the difference between $x_{\rm NLS}$ and $x_{\rm LS}$ for multiplicative noise and is valid for perturbation of any size. If the noisy linear system, \eqref{eq:mainPbLinear} is consistent, then we have
\begin{align*}
    \mathbb{E} \|{x}_{k+1} - x_{{\rm LS}} \| &\leq \left(1-\frac{1}{\tilde R}\right)^{k/2} \| x_0 - x_{{\rm NLS}} \| + e_1 \|x_{{\rm LS}} \| + e_2 \|A^\dagger\|\|b\|,
\end{align*}
    where $e_1 = \sqrt{\|F\|^2+\|(I+F)^{-1}F\|^2}$ and \\
    $\textstyle{e_2 = (1+e_1)(\frac{\epsilon}{\|b\|}+(1+\frac{\epsilon}{\|b\|})\sqrt{\|E\|^2+\|(I+E)^{-1}E\|^2})}$.
\end{remark}

\subsection{Using noise to speed up RK}\label{sec:preconditioner} 
    
Suppose we are interested in finding a solution to the noiseless system, $Ax = b$ (assuming that $A$ and $b$ are known) as in \eqref{eq:mainPbLinearfreeerror}. In this case, we know that the convergence rate for RK in solving \eqref{eq:mainPbLinearfreeerror} is $ \rho = 1-\frac{1}{R}$. However, Theorem \ref{theorem:general_theorem} tells us, for noisy systems as in \eqref{eq:mainPbLinear}, the convergence rate for RK in solving \eqref{eq:mainPbLinear} is $\tilde \rho = 1-\frac{1}{\tilde R}.$ This leads us to an interesting fact: {\em If we have a rough approximation of the singular values, then one can inject noise and speed up the convergence at the beginning of the optimization and then return back to the exact system after some iterations.} In other words, we can incorporate {\em additive pre-conditioners} to speed up the convergence of our iterative method. 
%Such pre-conditioners have been previously proposed~\cite{pan2010additive}.  
In what follows, we illustrate the above hypothesis with a simple example.

\paragraph{A proof-of-concept example} 
Let $A\in\R^{m\times n}$ be given matrix of rank $r.$ Let $A=U \Sigma V^\top$ be its SVD, where $U\in\R^{m\times r}$ and $V\in\R^{n\times r}$ are column orthonormal matrices and $\Sigma=\diag(\sigma_1,~\sigma_2,\cdots,\sigma_r)\in\R^{r \times r}$ be a diagonal matrix with nonincreasingly ordered singular values of $A$ along the diagonal.~We can write $A=\sum_{i=1}^r\sigma_iu_iv_i^\top$, where $u_i$ and $v_i$ are left and right singular vectors, respectively. Set $E=(\sigma_{r-1}-\sigma_r)u_r v_r^{\top}$, and therefore, $\Tilde{A} = A + E.$ %such that $\Sigma=\text{diag}\{\sigma_1,...,\sigma_r\}$ 
Based on this, we have $R = \frac{\Sigma_{i=1}^r \sigma_i^2}{\sigma_r^2}$, $\Tilde{R} = \frac{\Sigma_{i=1}^{r-1} \sigma_i^2 + \sigma_{r-1}^2}{\sigma_{r-1}^2}$, $\sigma_{\min}^2(\Tilde{A}) = \sigma_{r-1}^2 $, 
and $\|E\|^2 = (\sigma_{r-1}-\sigma_r)^2.$

On one hand, if RK is applied on a consistent noiseless system, $Ax = b$, we have: 
\begin{equation*}
    \mathbb{E}\| x_{k} - x_{{\rm LS}}\|^2 \leq \left(1-\frac{1}{R}\right)^{k} \| x_0 - x_{{\rm LS}} \|^2.
\end{equation*}
Hence, for the expected squared approximation error to reach a tolerance $\tau> 0 $, we need 
\begin{equation*}
    K \geq \frac{ \log \left(\frac{\tau}{\| x_0 - x_{{\rm LS}} \|^2}\right)}{\log\left(1-1/R\right)}
\end{equation*}
iterations. On the other hand, if RK is applied to the noisy system, $\Tilde{A}x \approx b$, from Theorem~\ref{theorem:general_theorem} we have:
\begin{equation*}
    \mathbb{E}\| x_{k} - x_{{\rm LS}}\|^2 \leq \left(1-\frac{1}{\Tilde{R}}\right)^{k} \| x_0 - x_{{\rm LS}} \|^2 + \frac{\| E x_{\rm LS}\|^2}{\sigma_{r-1}^2(\Tilde{A})}.
\end{equation*}
Letting $\tau_0 = \frac{\| E x_{\rm LS} - \epsilon \|^2}{\sigma_{r-1}^2(\Tilde{A})}$ (the convergence horizon from Theorem~\ref{theorem:general_theorem}), for the expected, squared approximation error to reach a given tolerance $\tau > \tau_0$, we require
\begin{equation*}
   K \geq \frac{ \log \left(\frac{\tau - \tau_0}{\| x_0 - x_{{\rm LS}} \|^2} \right)}{\log \left(1-1/\Tilde{R}\right)} 
\end{equation*}
iterations.

%\textcolor{blue}{I suggest rewriting the above in terms of the assumptions made in the first paragraph and removing this small toy numerical example.}
Let us take a toy example. If we consider $\Sigma= \diag(3~3~1)$, $\|x_{\rm LS}\|=1$, and $\| x_0 - x_{{\rm LS}} \|^2 \approx 10^6$, RK on noiseless system $Ax=b$ needs almost $K \geq 269$ iterations to reach an accuracy of $\epsilon=0.5$, while RK on noisy system $\Tilde{A}x=b$ needs only $K \geq 40$ to reach the same accuracy.

\section{Numerical Results} \label{sec:experiments}
In this section, we report our numerical results on the convergence of the RK algorithm on doubly-noisy linear systems, where both the coefficient matrix, $A$, and the measurement vector, $b$, {have} noise. We empirically validate our theoretical results and compare our bounds of Theorems  \ref{theorem:ls_rk_pnls} and \ref{theorem:general_theorem} in the additive case and evaluate the bound of Corollary \ref{theorem:general_theorem_multiplicative} in the multiplicative case under various noise magnitudes.
% \begin{figure}
%     \centering
%     \includegraphics[width=\textwidth]{graphs/Add_all.png}
%     \caption{Approximation error, $\|x_k - x_{{\rm LS}}\|^2$, (solid line) and the theoretical bound (dashed line) of Theorem \ref{theorem:general_theorem} for RK on a noisy linear system. $Ax=b$ is consistent with $m=500$, $n=300$, $r=300$, $\kappa_A=10$, $\sigma_{r}(A)=1$, and $\sigma_{1}(A)=10$. The entries of $E$ and $\epsilon$ are generated from the standard Normal distribution.}
%     \label{fig:bound4}
% \end{figure}
\begin{figure}
    \centering
    \includegraphics[width=\textwidth]{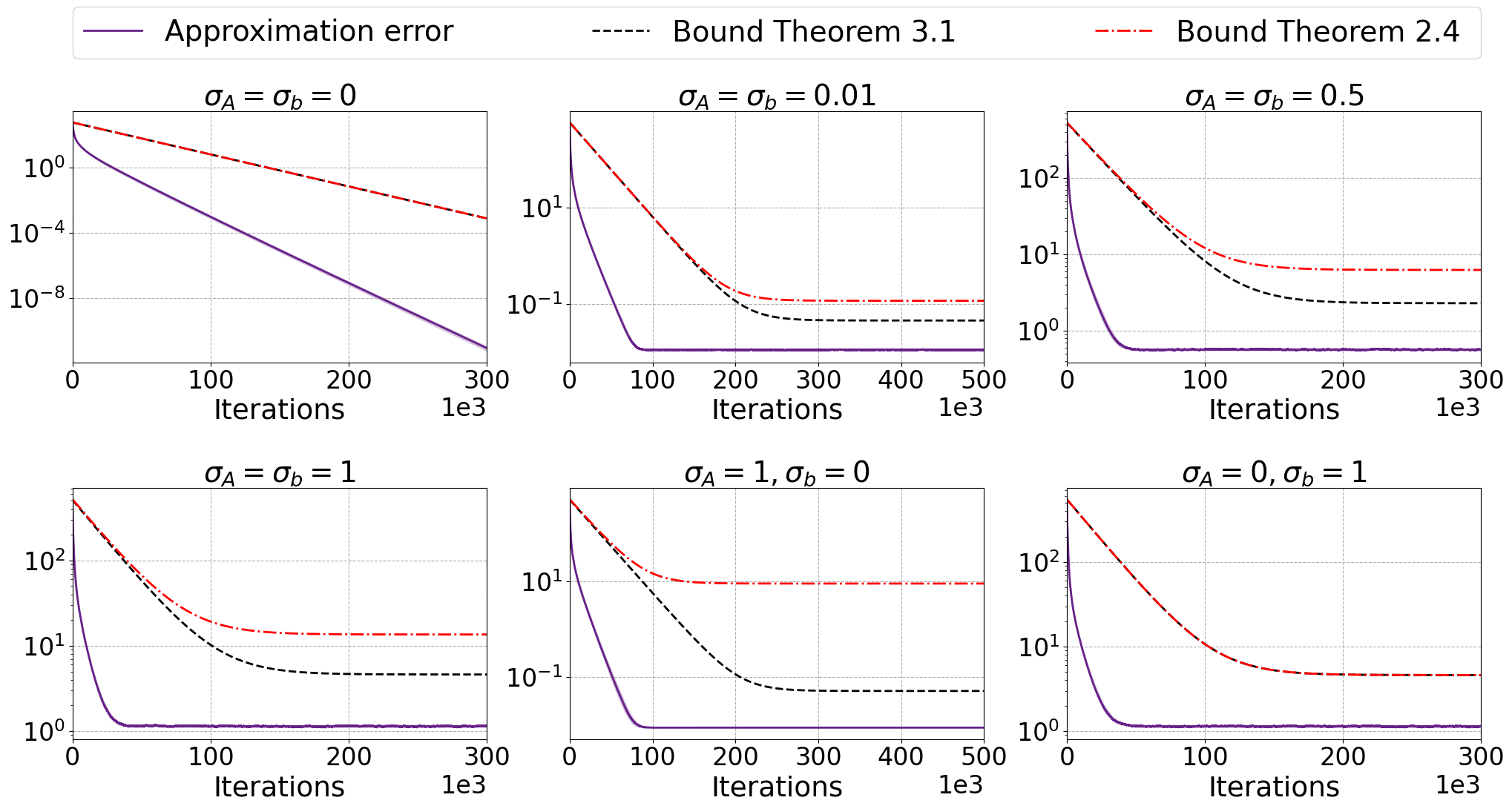}
    \caption{Approximation error, $\|x_k - x_{{\rm LS}}\|$, and the theoretical bounds of Theorems \ref{theorem:ls_rk_pnls} and \ref{theorem:general_theorem} for RK on a noisy linear system. $Ax=b$ and $\Tilde{A}x=b$ are consistent with $m=500$, $n=300$, $r=300$, $\kappa_A=10$, $\sigma_{r}(A)=5$, and $\sigma_{1}(A)=50$. The entries of $\epsilon$ are generated from the standard Normal distribution.}
    \label{fig:comp_3_4}
\end{figure}
\begin{table}
\small
    \centering
    \begin{tabular}{c c c c c c }
    \hline
       &     &     &              & Theoretical convergence   &   Empirical  convergence \\
    $\sigma_A$  &   $\sigma_b$ &   $\kappa_{\Tilde{A}}$ &  $\Tilde{R}$ &         horizon           &         horizon      \\
    \hline
    $0$  &   $0$ &   $10$ &  $11113.545$ & $0$  &   $0$  \\
    \hline
    $0$  &   $1$ &   $10$ &  $11113.545$ & $514.925$  &   $38.951$ \\
    \hline
    $0.005$  &   $0.005$ &   $9.984$ &  $11100.081$ & $4.276$  &   $0.328$ \\
    \hline
    $0.01$  &   $0.01$ &   $9.961$ &  $11052.639$ & $17.011$  &   $1.3$   \\
    \hline
    $0.05$  &   $0.05$ &   $9.759 $ &  $10342.253$ & $385.304 $  &   $30.795$  \\
    \hline
    $0.1$  &   $0.1$ &   $9.792$ &  $10060.141$ & $1365.144 $  &   $97.023$  \\
    \hline
    $0.5$  &   $0.5$ &   $7.35$ &  $5331.502$ & $4704.919$  &   $284.78$  \\
    \hline
    $1$  &   $1$ &   $7.682$ &  $5796.306$ & $6182.523$  &   $310.019$   \\
    \hline
    $1$  &   $0$ &   $7.682$ &  $5796.306$ & $6130.259$  &   $310.388$   \\
    \hline
    $20$  &   $20$ &   $7.264 $ &  $5114.528 $ & $5866.31 $  &   $312.153$   \\
    \hline
    \end{tabular}
    \caption{Effect of the noise on different quantities.}
    \label{tab:comp_exp_additive}
\end{table}
\paragraph{Construction of Consistent Linear Systems} 
We generate the elements of the coefficient matrix, $A$, as follows: for a given dimension, $(m, n)$, and rank $r =\text{rank}(A)$, we choose $\sigma_1(A)$ and $\sigma_{r}(A)$ as the maximum and the minimum nonzero singular values of $A$. We construct $A$ as $A=U \Sigma V^\top$, where $U \in \R^{m\times r}$ and $V \in \R^{n\times r}$. The entries of $U$ and $V$ are generated from the standard Gaussian distribution, and then, the columns are orthonormalized. The matrix, $\Sigma$ is an $r \times r$ diagonal matrix whose diagonal entries are distinct numbers (evenly or randomly spaced) chosen over the interval $[\sigma_{r}(A), \sigma_1(A)]$. As long as the singular values are distinct, the empirical results and conclusions about the behavior of the RK in the presence of noise are similar. {To construct a consistent linear system, we set the measurement vector, $b\in {\rm range}(A).$ %where the entries of $x_{\rm LS}$ are randomly generated from the Gaussian distribution. 
Note that $x_{\rm LS}=A^\dagger b$.}

\paragraph{Construction of Doubly-Noisy Linear Systems} Given $A$ and $b$ from the consistent linear system constructed above, we construct the noisy data, $\Tilde{A}$ and $\Tilde{b}$, as: \myNum{i} $\Tilde{A} = A + \sigma_A E$, in the additive noise case, and \myNum{ii} $\Tilde{A} = (I+\sigma_A E) A (I+\sigma_A F)$, in the multiplicative noise case. We construct $\Tilde{b} = b + \sigma_b \epsilon$. Note that $\sigma_A, \sigma_b\ge 0$ are the noise magnitudes. To evaluate the bound of Theorem \ref{theorem:ls_rk_pnls}, we construct a consistent partially noisy linear system, $\Tilde{A}x=b$, using noise $E$ that satisfies the condition, $\|A^\dagger\|\|E\| < 1$.

In all the experiments, we average the performance of RK over $10$ trials for each $(\sigma_A, \sigma_b)$ pair. The starting point, $x_0\in {\rm range}(\Tilde{A}^\top)$, and in each experiment, we start from the same $x_0$. The shaded regions in the graphs are given by $\mu \pm 0.5\sigma,$ where $\mu$ is the mean and $\sigma>0$ is the standard deviation of the approximation error. In Section \ref{sec:additive_noise_experiments}, we report the results for the additive noise. In Section \ref{sec:multiplicative_noise_experiments}, we show the results for {the} multiplicative noise. Finally, in Section \ref{sec:preconditoner_experiments}, we present some initial empirical results to validate our remark regarding RK and additive preconditioners; see Section~\ref{sec:preconditioner}.

\begin{figure}
    \centering
    \includegraphics[width=\textwidth]{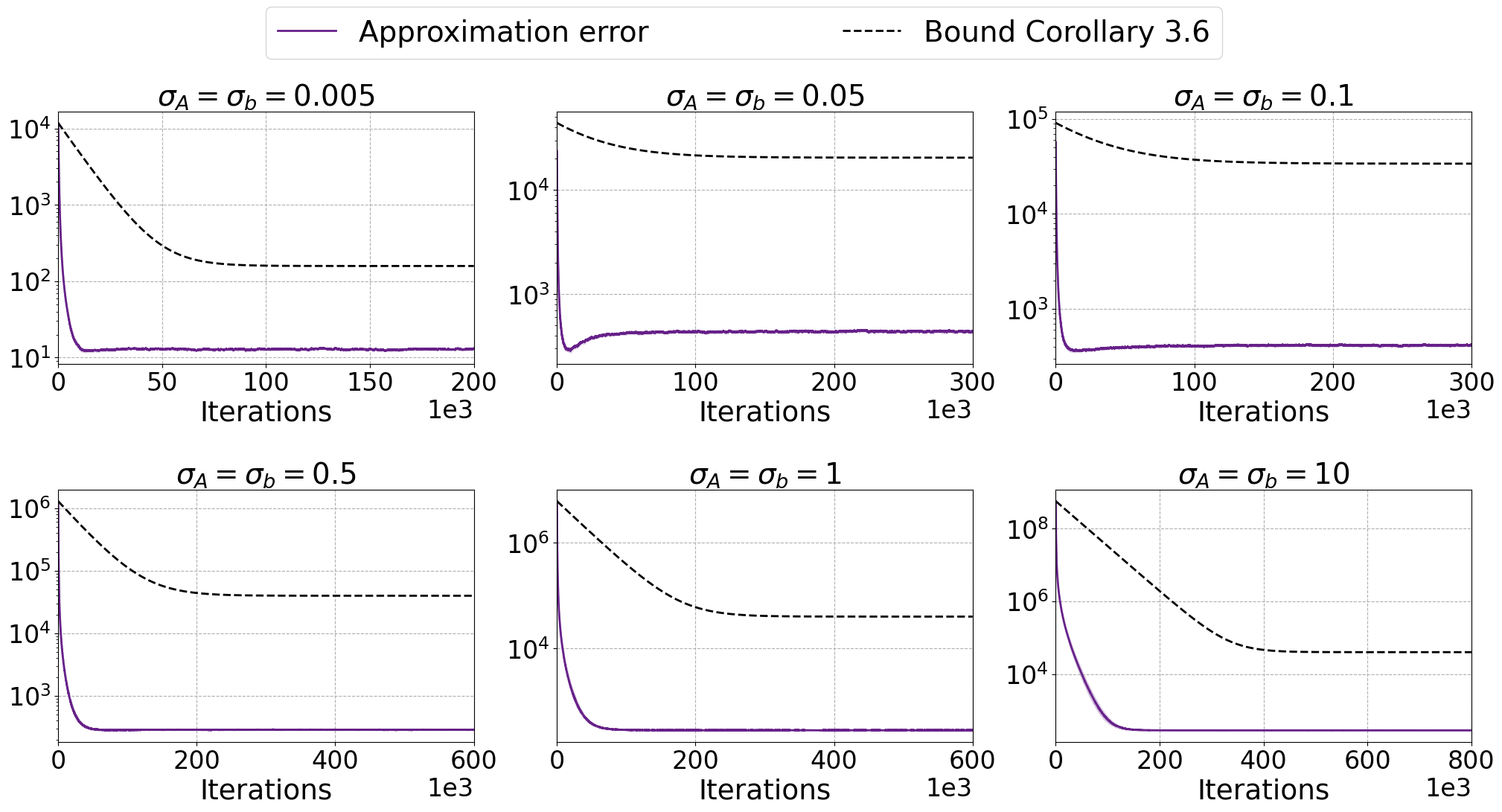}
    \caption{Approximation error, $\|x_k - x_{{\rm LS}}\|^2$, and the theoretical bound of Corollary \ref{theorem:general_theorem_multiplicative} for RK on a noisy linear system. $Ax=b$ is consistent with $m=500$, $n=300$, $r=300$, $\kappa_A=10$, $\sigma_{r}(A)=1$, and $\sigma_{1}(A)=10$. The entries of $E$ and $\epsilon$ are generated from the standard normal distribution, $F=0$.}
    \label{fig:bound6F}
\end{figure}

\subsection{Additive noise} \label{sec:additive_noise_experiments}
Figure \ref{fig:comp_3_4} shows the average empirical approximation error, $\| x_{k} - x_{{\rm LS}}\|$ across iterations and the theoretical bound provided in Theorem \ref{theorem:general_theorem} for different $\sigma_A$ and $\sigma_b$. With added noise, RK converges to within a neighborhood of $x_{{\rm LS}}$, the solution of the noiseless linear system, $Ax=b$. The larger the noise, the further the RK iterates from $x_{{\rm LS}}$. That is, $\| x_{K} - x_{{\rm LS}}\|$ increases as the noise magnitudes ($\sigma_A,\sigma_b$) increase. E.g., for $\sigma_A = \sigma_b = 0.01$, $\| x_{K} - x_{{\rm LS}}\|$ is below $0.1$, while for $\sigma_A = \sigma_b = 0.5$, $\| x_{K} - x_{{\rm LS}}\|$ is around $1$. Increasing the noise also increases the convergence horizon, and the theoretical bound becomes less sharp. Note that in the noise-free case ($\sigma_A=\sigma_b=0$), we validate the existing results about the convergence of RK to $x_{\rm LS}$. 

%\textcolor{blue}{Can we add additional information about this experiment e.g., dimensions, rank, smallest and largest singular values}
%\textcolor{brown}{I specify those values in the caption of each figure}

Table \ref{tab:comp_exp_additive} shows the effect of changing noise magnitudes on the following quantities: The condition number of $\Tilde{A}$ denoted by $\kappa_{\Tilde{A}}$, the scaled condition number, $\Tilde{R}$, of the noisy matrix, the theoretical convergence horizon, $\frac{\| E x_{\rm LS} - \epsilon \|^2}{\sigma_{\rm min}^2(\Tilde{A})}$, and the empirical convergence horizon computed experimentally by  $\mathbb{E}\| x_{K} - x_{{\rm LS}}\|^2$ at the last iteration $K=3\times 10^5$. \footnote{The  empirical convergence horizon is given by $\mathbb{E}\| x_{K} - x_{{\rm LS}}\|^2 - \left(1-\frac{1}{\Tilde{R}}\right)^{K} \| x_0 - x_{{\rm LS}} \|^2$. However, at iteration $K=3\times 10^5$, $\left(1-\frac{1}{\Tilde{R}}\right)^{K} \| x_0 - x_{{\rm LS}} \|^2 \approx 0.$ } The results in the table show that increasing the noise {decreases} the condition number of the coefficient matrix and the value of $\Tilde{R}$, which results in a faster convergence rate. However, this is at the cost of accuracy as both the theoretical and empirical convergence horizons grow as the noise levels increase.

In addition, in Figure \ref{fig:comp_3_4}, we compare the bounds of Theorems \ref{theorem:ls_rk_pnls} and \ref{theorem:general_theorem} in the case of noise $\sigma_A E$ satisfying assumptions of Theorem \ref{theorem:ls_rk_pnls}. We can see that the bound of Theorem \ref{theorem:general_theorem} is better than that of Theorem \ref{theorem:ls_rk_pnls}, which validates our theoretical results stated in Remark~\ref{rem:better_horizon}. 

%Figure \ref{fig:comp_3_4_rank_deficient} shows a case in which the noise on the coefficient matrix does not satisfy assumption \ref{assumption:rank}. In particular, here we have $2{\sigma}_{\rm min}(\tilde{A}) < {\sigma}_{\rm min}({A}) - \|E \|$ and the bound of Theorem \ref{theorem:ls_rk_pnls} is better, in that case, than that of  Theorem \ref{theorem:general_theorem}.
%\textcolor{blue}{This set of experiments seems confusing to me -- it essentially says our general bound is better when the assumptions needed for theorem 2.5 are satisfied, but theorem 2.5 is better when its assumptions are violated. Is that correct? I think we should be more careful about how we go about motivating this experiment.}

%\begin{figure}
%    \centering
%   \includegraphics[width=0.5\textwidth]{graphs/Add_Rank_deficient.png}
%    \caption{Approximation error, $\|x_k - x_{{\rm LS}}\|$, and the theoretical bounds of Theorems \ref{theorem:ls_rk_pnls} and \ref{theorem:general_theorem} for RK on a noisy linear system. $Ax=b$ is consistent with $m=500$, $n=100$, $r=50$, $\sigma_{r}(A)=5$, $\sigma_{1}(A)=10$, and $\text{rank}(\Tilde{A})=100$. The noise $E$ is random and normalized. The starting point is randomly chosen from the normal distribution.}
%    \label{fig:comp_3_4_rank_deficient}
%\end{figure}

\subsection{Multiplicative Noise} \label{sec:multiplicative_noise_experiments}
Figures \ref{fig:bound6F}--\ref{fig:bound6} show the approximation error, $\| x_{k} - x_{{\rm LS}}\|^2$ vs. iterations alongside the theoretical bound provided in Corollary \ref{theorem:general_theorem_multiplicative} for various values of $\sigma_A$ and $\sigma_b$. Figures \ref{fig:bound6F} and \ref{fig:bound6E} show the results when $F=0$ and $E=0$, respectively; Figure \ref{fig:bound6} shows the results for the general case, $F\neq 0$ and $E \neq 0$. In all cases, RK converges to a vector that is in a neighborhood of $x_{{\rm LS}}$, similar to the additive noise case, and the radius of the neighborhood increases as the noise magnitudes $(\sigma_A,\sigma_b)$ increase. The curves of the theoretical bound from the figures show that increasing the noise affects the theoretical convergence horizon, and that is particularly due to the right-hand noise, $F$.
\begin{figure}
    \centering
    \includegraphics[width=\textwidth]{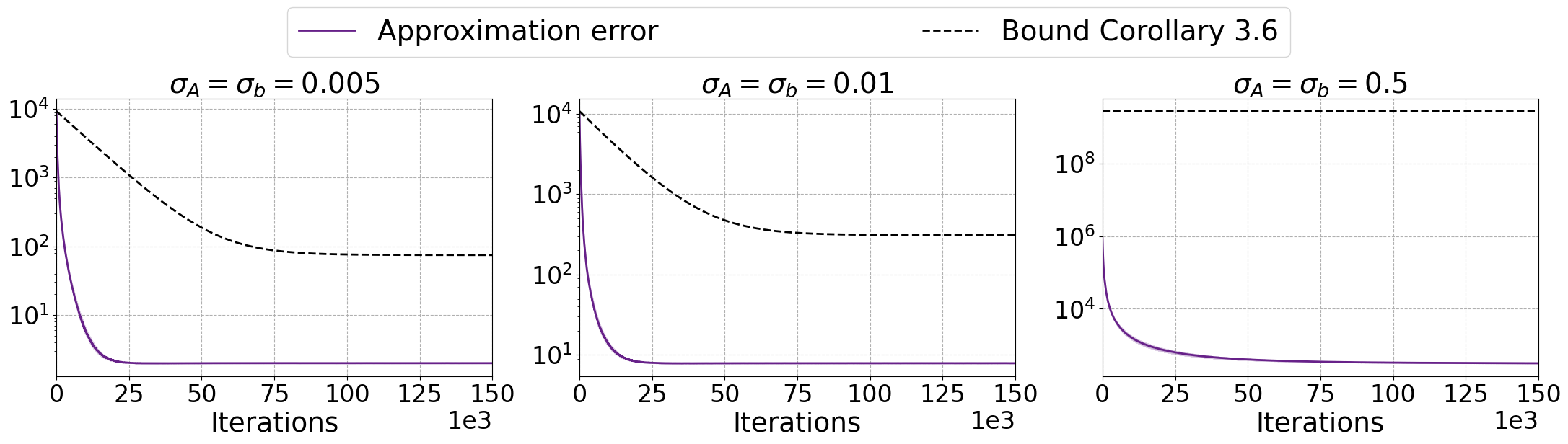}
    \caption{Approximation error, $\|x_k - x_{{\rm LS}}\|^2$, and the theoretical bound of Corollary \ref{theorem:general_theorem_multiplicative} for RK on a noisy linear system. $Ax=b$ is consistent with $m=500$, $n=300$, $r=300$, $\kappa_A=10$, $\sigma_{r}(A)=1$, and $\sigma_{1}(A)=10$. The entries of $F$ and $\epsilon$ are generated from the standard Normal distribution, and $E=0$.}
    \label{fig:bound6E}
\end{figure}

\begin{figure}
    \centering
    \includegraphics[width=\textwidth]{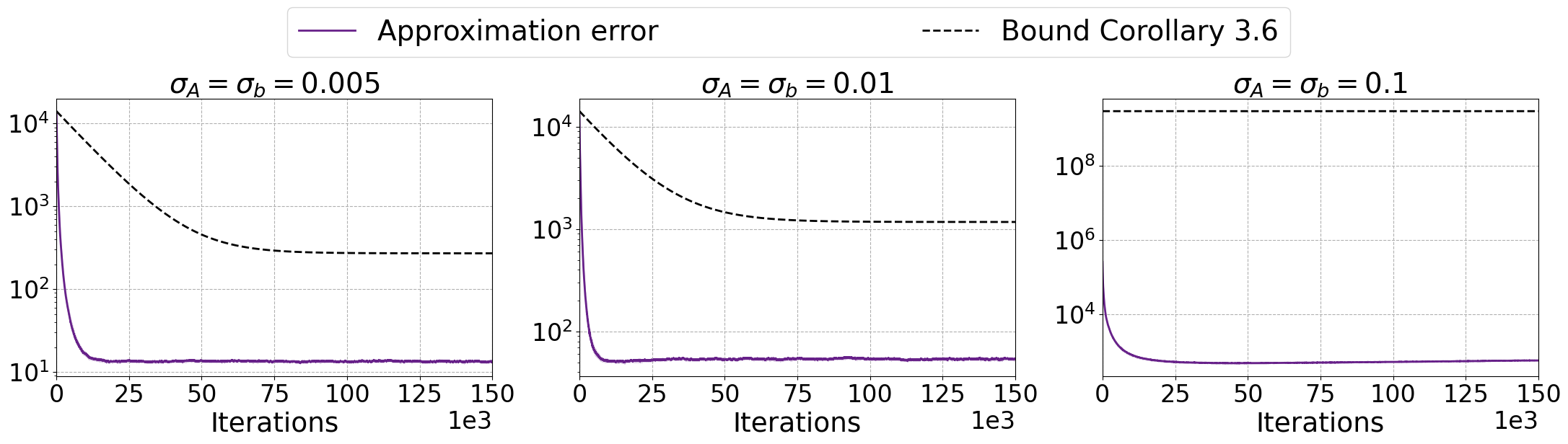}
    \caption{Approximation error, $\|x_k - x_{{\rm LS}}\|^2$, and the theoretical bound of Corollary \ref{theorem:general_theorem_multiplicative} for RK on a noisy linear system. $Ax=b$ is consistent with $m=500$, $n=300$, $r=300$, $\kappa_A=10$, $\sigma_{r}(A)=1$, and $\sigma_{1}(A)=10$. The entries of $F$, $E$, and $\epsilon$ are generated from the standard Normal distribution.}
    \label{fig:bound6}
\end{figure}

\subsection{Additive Preconditioner}\label{sec:preconditoner_experiments}

Figure \ref{fig:preconditioner} shows preliminary numerical results of the additive preconditioner described in Section \ref{sec:preconditioner}. We design the noise, $E$ as explained in Section \ref{sec:preconditioner}. RK was applied to the noise-free system and the noisy system. We observe that at the beginning of the optimization process,  RK applied on the noisy system, $\Tilde{A} x=b$ rapidly decreases the approximation error compared to the noise-free system. 
%In the beginning, RK applied on the noise-free system and the noisy system. We observe that for noise-free system, RK needs around 3000 more iterations to reach the same approximation error, $\|x_k - x_{{\rm LS}}\|^2$ compare to when applied to the noisy system, $\Tilde{A} x=b$.
However, noise-free RK continues to minimize the approximation error while noisy RK becomes stationary. This phenomenon illustrates the benefit of adding a well-crafted noise to the matrix $A$ to speed up the convergence at the beginning of the optimization process. Then one needs to switch back to the noise-free system to continue decreasing the approximation error, $\|x_k - x_{{\rm LS}}\|^2$. 

\begin{figure}
    \centering
     \includegraphics[width=0.48\textwidth]{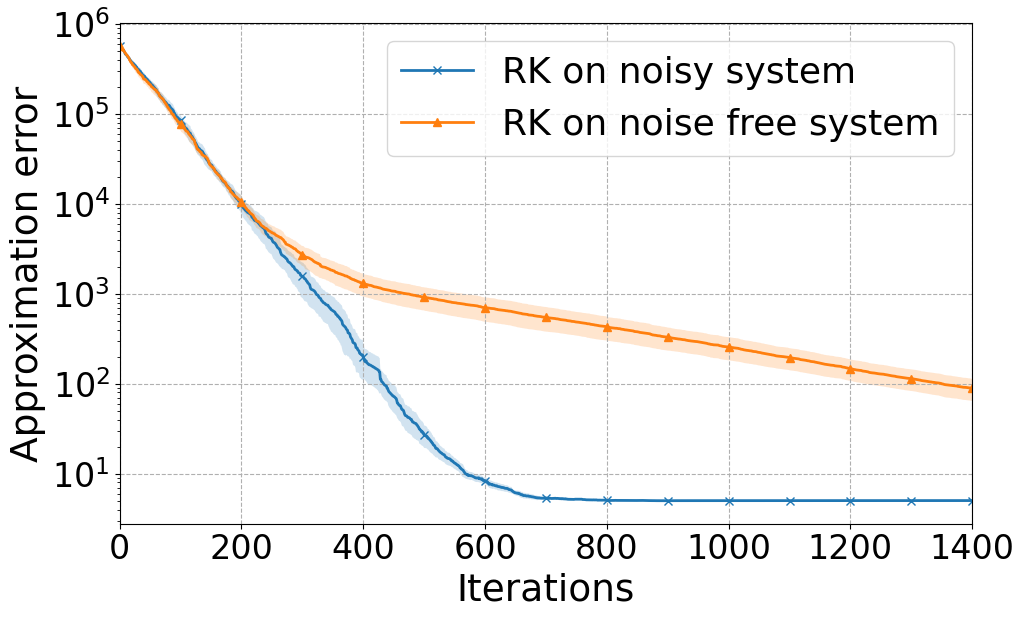}
     \includegraphics[width=0.48\textwidth]{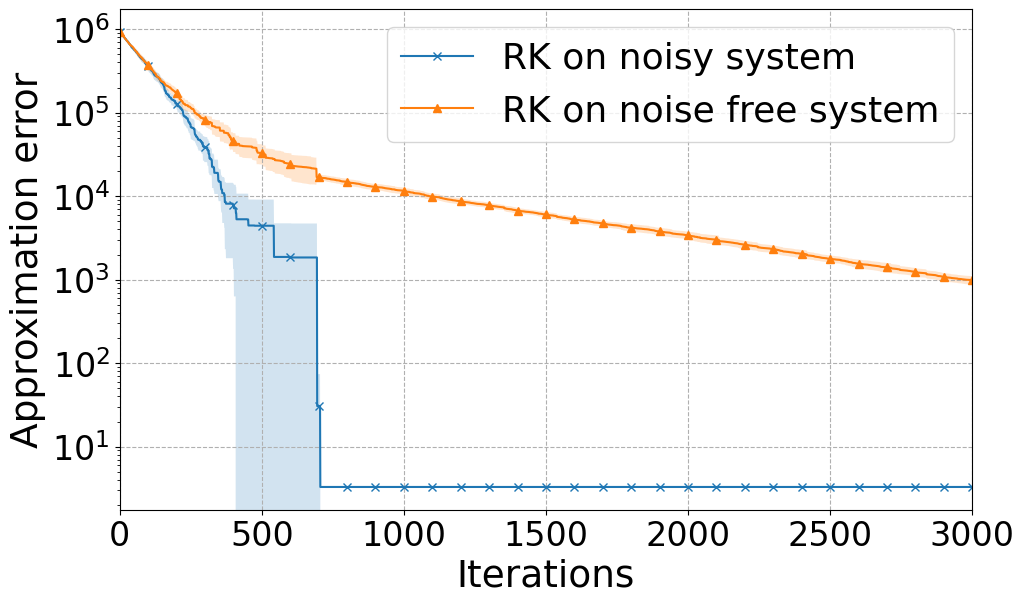}
    \caption{Approximation error, $\|x_k - x_{{\rm LS}}\|^2$, for RK on noise-free system $Ax=b$, and on noisy system $\Tilde{A}x=b$. On the left: $m=100$, $n=50$, $r=50$, $R=785$ and $\Tilde{R}=50$. On the right $m=100$, $n=100$, $r=100$, $R=1585$ and $\Tilde{R}=100$. For both cases $\sigma_{r}(A)=1$, and $\sigma_{i}(A)=4$ for $i=1,...,r-1$.}
    \label{fig:preconditioner}
\end{figure}

%\textcolor{blue}{Can we move around the experiments, so they appear closer to their citation? For Example, Fig 1 is explained on Page 12 but appears on page 9. As another example, Table 2 is discussed after Fig 1 but Fig 2 and Fig 3 appear before Table 2 appears.}

%\textcolor{blue}{Can we add additional information about this experiment e.g., dimensions, rank, smallest and largest singular values}
%\textcolor{brown}{I specify those values in the caption of each figure}

%\textcolor{blue}{Can we verify that $\left(1-\frac{1}{\Tilde{R}}\right)^{K} \| x_0 - x_{{\rm LS}} \|^2$ is essentially 0 and take the last empirical average approximation error $\mathbb{E}\| x_{K} - x_{{\rm LS}}\|^2 $ to be our horizon?}
%\textcolor{brown}{$\left(1-\frac{1}{\Tilde{R}}\right)^{K} \| x_0 - x_{{\rm LS}} \|^2$ is indeed 0 and the empirical convergence horizon is equal to $\mathbb{E}\| x_{K} - x_{{\rm LS}}\|^2 $}

\clearpage

\section*{Acknowledgments}
The authors thank the anonymous referees whose feedback significantly improved the quality of this manuscript. Aritra Dutta acknowledges being an affiliated researcher at the Pioneer
Centre for AI, Denmark. 

\bibliographystyle{siamplain}
\bibliography{references}
\end{document}